\documentclass[reqno, amsmath, amssymb, amsthm, latexsym, 12pt]{amsart}
\usepackage[margin=1in]{geometry}
\usepackage[colorlinks=true]{hyperref}
\hypersetup{urlcolor=blue, citecolor=blue,linkcolor=blue}
\usepackage{color}

\usepackage{amssymb}

\usepackage{graphicx}

\newtheorem{theorem}{Theorem}[section]
\newtheorem{lemma}[theorem]{Lemma}

\theoremstyle{definition}

\newtheorem{lem}[theorem]{Lemma}

\newtheorem{ex}[theorem]{Example}

\theoremstyle{remark}
\newtheorem{remark}[theorem]{Remark}

\numberwithin{equation}{section}

\newcommand{\diag}{\operatorname{diag}}

\newcommand{\D}{\operatorname{D}}

\renewcommand{\P}{\mathbb{P}}
 
\newcommand{\N}{\mathbb{N}} 
\newcommand{\R}{\mathbb{R}}

\newcommand{\T}{\operatorname{UT}}  

\newcommand{\mtx}[1]{ \begin{bmatrix} #1 \end{bmatrix}}

\begin{document}

\title{The Karcher Mean on $2\times 2$ Triangular Matrices with Positive Diagonal Entries}


\author{Huajun Huang}
\address{Department of Mathematics and Statistics, Auburn University, Auburn, AL 36849, USA}
\curraddr{}
\email{huanghu@auburn.edu}
\thanks{}

	\subjclass[2020]{Primary 47A64, Secondary 15A16, 15A24, 22E25}

\date{}

\dedicatory{}

\commby{}

\begin{abstract} 
	Let $\T_n$ denote the group of 
	$n\times n$ real upper triangular matrices with positive diagonal entries. 
	We derive a closed-form analytic expression for the Karcher mean on $\T_2$
	and obtain a sharp convergence threshold for the projection mean iteration with fixed initial diagonal, together with a sufficient convergence condition for arbitrary initial points. 
	In addition, we obtain   explicit formulas for the log-Euclidean mean and 
	establish a Lie--Trotter type formula for the Karcher mean on $\T_2$. 
	For $\T_n$ we derive the diagonal and first superdiagonal entries of every solution of the Karcher equation and transfer nonconvergence examples from $\T_2$.
\end{abstract}

\maketitle

\section{Introduction}

The notion of the Karcher mean, also known as the Riemannian center of mass, was introduced by Karcher \cite{Karcher1977} in the setting of Riemannian manifolds. In the context of positive definite matrices, it coincides with the Fr\'echet mean under the affine-invariant Riemannian metric and has been extensively studied by Moakher \cite{Moakher2005}, Bhatia and Holbrook \cite{HolbrookBhatia2006}, and Lawson and Lim \cite{LawsonLim2011,LawsonLim2014}. More generally, the Karcher mean has been developed in a wide range of settings, including Hilbert spaces, $C^*$-algebras, and Lie groups \cite{Lawson2020,Lawson2024,Lawson2025}.

Let $\omega=(w_1,\ldots,w_m)$ be a probability vector and let $A_1,\ldots,A_m$ be elements of a space equipped with an exponential map. The (weighted) Karcher mean $X=G(\omega;A_1,\ldots,A_m)$ is defined, when it exists uniquely, as the solution of the Karcher equation
\begin{equation}\label{Karcher-eq-intro}
	\sum_{i=1}^m w_i \exp_X^{-1}(A_i)=0.
\end{equation}
In  the cone $\mathbb{P}$ of positive definite matrices, this equation is equivalent to
\begin{equation}\label{Karcher-eq-P}
	\sum_{i=1}^m w_i \log\!\bigl(X^{-1/2}A_iX^{-1/2}\bigr)=0,
\end{equation}
which admits a unique solution for all $A_i\in\mathbb{P}$.
Despite its fundamental importance, explicit formulas for the Karcher mean are known only in limited situations: for instance, the two-variable case reduces to the weighted geometric mean, and commuting families admit a multiplicative expression. In general, however, no closed-form formula is available when $m \ge 3$ and the matrices do not commute.

Consequently, numerical schemes such as the projection mean algorithm have been developed to approximate the Karcher mean. Starting from an initial point $X_0$, the iteration is given by
\begin{equation}\label{projection mean algorithm}
	\log\!\bigl(X_k^{-1}X_{k+1}\bigr)
	= \frac{1}{c}\sum_{i=1}^m w_i \log\!\bigl(X_k^{-1}A_i\bigr),
	\qquad k\ge 0,
\end{equation}
where $c>0$ is a relaxation denominator (the step size is $1/c$). On spaces with nonpositive curvature, this iteration for $c=1$ is known to converge  under mild conditions.

In this paper, the Karcher mean refers to the solution of the Lie-group logarithmic equation $\sum_iw_i\log(X^{-1}A_i)=0$. We investigate this mean and the projection mean algorithm on the Lie group $\T_n$ of real upper triangular matrices with positive diagonal entries. This group admits the semidirect product decomposition
\[
\T_n = \D_n \ltimes \mathrm{T}_n,
\]
where $\D_n$ is the group of positive diagonal matrices and $\mathrm{T}_n$ is the group of unipotent upper triangular matrices. Although the projection mean algorithm with $c=1$ is known to converge on both $\D_n$ and $\mathrm{T}_n$ (cf. \cite{Lawson2025}), numerical evidence indicates that its behavior on $\T_n$ is fundamentally different: convergence is no longer guaranteed for any   step size $c$.

The main contribution of this paper is to derive an explicit analytic expression for  the Karcher mean and to develop a detailed analysis of  the projection mean iteration on   $\T_2$, which  reduces to a one-dimensional dynamical system governing the off-diagonal entry. 
In Section 2, we show that the Karcher mean on $\T_2$ admits a closed-form analytic expression. Let $D = \diag(d_1,d_2)$ be the diagonal part of  $A_1^{w_1}\cdots A_m^{w_m}$  and set
\[
D^{-1}A_i = \mtx{a_{11}^{(i)} &a_{12}^{(i)} \\[1em] 0  &a_{22}^{(i)}},\quad 
i=1,\ldots,m.
\] 
Let $f(x,y) = \dfrac{\log(x/y)}{x-y}$ for $x,y>0$ and set $f(x,x)=\dfrac{1}{x}$ by continuity. 
Then
\[
G(\omega; A_1,\ldots, A_m)=D \, G(\omega; D^{-1}A_1,\ldots, D^{-1}A_m)
= D  \exp\!\left( 
\frac{1}{c_0}\sum_{i=1}^{m}w_i 
\log \left(D^{-1}A_i\right)
\right)
\]
where $c_0=\sum_{i=1}^m w_i\, f\!\bigl(a_{22}^{(i)},a_{11}^{(i)}\bigr)\,  a_{22}^{(i)}$  is a constant no less than $1$. 
Explicitly,
\[
G(\omega; A_1,\ldots, A_m)
= \begin{bmatrix}
	d_1 
	&
	\dfrac{d_1\sum_{i=1}^m w_i\, f\!\bigl(a_{22}^{(i)},a_{11}^{(i)}\bigr)\, a_{12}^{(i)}}
	{\sum_{i=1}^m w_i\, f\!\bigl(a_{22}^{(i)},a_{11}^{(i)}\bigr)\,  a_{22}^{(i)} }
	\\[1em]
	0 &
	d_2 
\end{bmatrix}.
\]
Besides, the constants $c_0$ and $c$ characterize the fixed-diagonal convergence behavior
of the projection mean iteration \eqref{projection mean algorithm}. 
When the initial diagonal is $D$, the error in the $(1,2)$-entry is multiplied by $1-c_0/c$ at each step. Thus the sharp convergence condition for a nonstationary initial point with this diagonal is $c>c_0/2$, and $c=c_0$ gives the mean in one step. For arbitrary initial points, convergence still holds when $c>c_0/2$, but exceptional convergent trajectories may occur below this threshold. With at least two positive weights, no universal parameter $c>0$ ensures convergence for all input matrices. 

As further consequences of the explicit formula, we derive closed-form expressions for the log-Euclidean mean and establish a Lie–Trotter type formula for the Karcher mean on $\T_2$. These results provide additional insight into the interplay between different matrix means in the triangular setting.

Finally, we extend our analysis to $\T_n$ for $n \ge 3$. Using a recursive structure based on principal submatrices, we show that several key features persist in higher dimensions. In particular, the absence of a universal convergence parameter $c$ holds for all $n$.

The results of this paper provide one of the few instances where the
Karcher mean admits a fully explicit analytic description in a genuinely
noncommutative setting, and they reveal new dynamical phenomena for iterative algorithms on $\T_n$ and other solvable Lie groups.
Besides, our methods and results extend naturally to the group of $n\times n$ complex upper triangular matrices with positive diagonal entries.


\section{The Karcher mean and the projection mean iteration on $\T_2$}

In this section, we give the explicit formula of the  Karcher mean on $\T_2$  and provide the convergence behavior of the projection mean iteration \eqref{projection mean algorithm} on $\T_2$ with respect to different factors $c$.


\begin{lemma}\label{thm: log 2x2 UT}
	Suppose that $a_{11}, a_{22}>0$. Then 
	\begin{equation}\label{2x2 upp tri log}
		\log\left(\begin{bmatrix}
			a_{11} &a_{12} \\ 0 &a_{22}
		\end{bmatrix}\right)=\begin{bmatrix}
			\log a_{11}  &\frac{\log(a_{22}/a_{11})}{a_{22}-a_{11}} a_{12}\\0 &\log a_{22} 
		\end{bmatrix}.
	\end{equation}
\end{lemma}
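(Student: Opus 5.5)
The plan is to verify the formula in the exponential direction. I will show that the matrix on the right-hand side of \eqref{2x2 upp tri log}, call it $L$, satisfies $\exp(L)=A:=\begin{bmatrix} a_{11} & a_{12}\\ 0 & a_{22}\end{bmatrix}$. Then I will invoke uniqueness of the real logarithm in this setting. The exponential map restricted to real upper triangular $2\times 2$ matrices is a bijection onto $\T_2$, and $\log$ on $\T_2$ means its inverse, which agrees with the principal logarithm since the eigenvalues $a_{11},a_{22}$ are positive. So it suffices to compute $\exp$ of a general upper triangular matrix $L=\begin{bmatrix}\alpha & \beta\\ 0&\delta\end{bmatrix}$ and check injectivity.

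For the computation, I will use the standard formula for a function of a $2\times2$ upper triangular matrix:
\[
\exp(L)=\begin{bmatrix} e^{\alpha} & \beta\,\dfrac{e^{\delta}-e^{\alpha}}{\delta-\alpha}\\ 0 & e^{\delta}\end{bmatrix},
\]
with the off-diagonal coefficient read as $e^{\alpha}$ when $\alpha=\delta$. To justify it, I will prove by induction that $L^k$ has $(1,2)$-entry $\beta\sum_{j=0}^{k-1}\alpha^j\delta^{k-1-j}=\beta\,\frac{\delta^k-\alpha^k}{\delta-\alpha}$ and then sum the exponential series. Substituting $\alpha=\log a_{11}$, $\delta=\log a_{22}$ and $\beta=\frac{\log(a_{22}/a_{11})}{a_{22}-a_{11}}a_{12}$ gives
\[
\beta\,\frac{a_{22}-a_{11}}{\log a_{22}-\log a_{11}}=a_{12},
\]
so $\exp(L)=A$.

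The only delicate point is the degenerate case $a_{11}=a_{22}$. Here the quotient $\frac{\log(a_{22}/a_{11})}{a_{22}-a_{11}}$ must be interpreted by continuity as $1/a_{11}$, matching the convention $f(x,x)=1/x$ of the introduction. I will handle this case directly: $L=\begin{bmatrix}\alpha & a_{12}/a_{11}\\0&\alpha\end{bmatrix}=\alpha I+N$ with $N^2=0$, so $\exp(L)=e^{\alpha}(I+N)=A$.

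Finally, for uniqueness, the formula above shows that the map $(\alpha,\beta,\delta)\mapsto\exp(L)$ is injective. The diagonal entries determine $\alpha$ and $\delta$, and the factor $\frac{e^\delta-e^\alpha}{\delta-\alpha}$ is positive, so $\beta$ is determined as well. Hence $L$ is the unique real upper triangular logarithm of $A$, which completes the argument.
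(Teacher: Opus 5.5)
Your proof is correct, but it runs in the opposite direction from the paper's.

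\textbf{The paper's route.} The paper computes the logarithm directly. Assuming $a_{11}\ge a_{22}$, it writes $A=a_{11}(I-M)$, where $M$ is upper triangular with spectral radius $(a_{11}-a_{22})/a_{11}<1$. It then expands $\log(I-M)=-\sum_{p\ge1}M^p/p$ and sums the resulting $(1,2)$-entries $-\frac{a_{12}}{a_{11}}\bigl(\frac{a_{11}-a_{22}}{a_{11}}\bigr)^{p-1}$ as a logarithmic series. This yields the principal logarithm by construction, so no uniqueness step is needed. However, it relies on the ordering normalization to keep the series convergent. For $a_{22}>2a_{11}$ the series with $a_{11}$ factored out diverges, so that ordering must be handled by factoring out $a_{22}$ instead.

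\textbf{Your route.} You use the everywhere-convergent exponential series and treat both orderings uniformly. As a by-product you get the exponential formula and the bijectivity of $\exp$ from real upper triangular matrices onto $\T_2$. The paper later extracts exactly this from the lemma when it writes down \eqref{2x2 upp tri exp}.

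\textbf{The identification step.} The cost of your route is the final step. Your claim that the principal logarithm of $A$ is real upper triangular is true, since it is a real polynomial in $A$. But you could bypass the injectivity argument altogether: $L$ has real eigenvalues $\log a_{11},\log a_{22}$, and the principal logarithm is the unique logarithm of $A$ with spectrum in the strip $|\operatorname{Im} z|<\pi$.
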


\begin{proof}
	Without loss of generality, we assume that $a_{11}\ge a_{22}>0.$ Then
	\[
	\begin{aligned}
		\log\left(\begin{bmatrix}
			a_{11} &a_{12} \\ 0 &a_{22}
		\end{bmatrix}\right)
		&= 
		(\log a_{11} )I_2+	\log\left(\begin{bmatrix}
			1 &a_{12}/a_{11} \\ 0 &a_{22}/a_{11}
		\end{bmatrix}\right)
		\\
		&= 
		(\log a_{11} )I_2 - \sum_{p=1}^{\infty} \frac{1}{p} 
		\begin{bmatrix}
			0 &-\frac{a_{12}}{a_{11}} \\ 0 &\frac{a_{11}-a_{22}}{a_{11}}
		\end{bmatrix}^{p}
		\\
		&= 
		(\log a_{11} )I_2 - \sum_{p=1}^{\infty} \frac{1}{p} 
		\begin{bmatrix}
			0 &-\frac{a_{12}}{a_{11}}\left(\frac{a_{11}-a_{22}}{a_{11}}\right)^{p-1} \\[1em] 0 &\left(\frac{a_{11}-a_{22}}{a_{11}}\right)^{p}
		\end{bmatrix}
		\\
		&= 
		\begin{bmatrix}
			\log a_{11}  &\frac{\log(a_{22}/a_{11})}{a_{22}-a_{11}} a_{12}
			\\[1em] 0 &\log a_{22} 
		\end{bmatrix}. \qquad  \qquad  \qedhere 
	\end{aligned}
	\]
\end{proof}

Lemma \ref{thm: log 2x2 UT}   reveals that the $(1,2)$-entry of the logarithm is governed by a divided difference of the logarithm. To analyze this dependence systematically, we introduce the following function: 
\begin{equation}\label{f function}
	f(x,y) := \frac{\log(x/y)}{x-y}=\frac{\log(x)-\log(y)}{x-y},\qquad x,y\in (0,\infty),
\end{equation}
and set $f(x,x) := \lim_{y\to x} f(x,y)=\dfrac{1}{x}$ by continuity. 

\begin{lemma}\label{thm: f fun prop}
	The function $f$ defined in \eqref{f function} satisfies the following properties. 
	\begin{enumerate}
		\item $f(x,y)=f(y,x)$  for all $x,y\in (0,\infty)$.
		\item $f(cx,cy)=\dfrac{1}{c}f(x,y)$  for all $c, x,y\in (0,\infty)$.
		\item Let $x_1,x_2,\dots,x_m>0$ satisfy  
		$x_1^{w_1}x_2^{w_2}\cdots x_m^{w_m}=1$ for a probability vector $(w_1,\ldots,w_m)$. Then 
		\begin{equation}\label{Jensen ineq}
			\sum_{i=1}^m w_i \frac{\log x_i}{x_i-1} =\sum_{i=1}^m w_i f(x_i,1) =\sum_{i=1}^m w_i f(1/x_i,1)  \ge 1.
		\end{equation}
		Moreover,   equality in  \eqref{Jensen ineq} holds if and only if $x_i=1$ for all $i$ with $w_i>0.$ 
	\end{enumerate}
\end{lemma}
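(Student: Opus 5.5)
Parts (1) and (2) follow directly from the definition \eqref{f function}; the work is in part (3).

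For (1), the numerator $\log x-\log y$ and the denominator $x-y$ both change sign when $x$ and $y$ are swapped, so the quotient is unchanged. On the diagonal, $f(x,x)=1/x$ is trivially symmetric. For (2), the numerator satisfies $\log(cx)-\log(cy)=\log x-\log y$, while the denominator becomes $c(x-y)$; on the diagonal, $f(cx,cx)=1/(cx)$. Both cases give $f(cx,cy)=\frac1c f(x,y)$.

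For (3), the middle equality comes from (1) and (2): $f(1/x,1)=f(1/x,x/x)=x\,f(1,x)=x\,f(x,1)$. This is not termwise equal to $f(x,1)$, so the middle equality is not termwise. I will first check whether $x f(x,1)=f(x,1)$ can hold. Write $x=e^t$ and set
\[
g(t)=f(e^t,1)=\frac{t}{e^t-1},
\]
with $g(0)=1$. Then $f(1/x,1)=g(-t)=\frac{-t}{e^{-t}-1}=\frac{te^t}{e^t-1}=g(t)+t$. Summing with weights gives $\sum_i w_i f(1/x_i,1)=\sum_i w_i g(t_i)+\sum_i w_i t_i$, and the constraint $\prod_i x_i^{w_i}=1$ says exactly that $\sum_i w_i t_i=0$. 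So the two sums agree. This is where the hypothesis enters.

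For the inequality, I will use Jensen: if $g$ is convex on $\R$, then $\sum_i w_i g(t_i)\ge g\bigl(\sum_i w_i t_i\bigr)=g(0)=1$. The main obstacle is proving convexity, and strict convexity for the equality case. A clean route is the integral representation
\[
g(t)=\Bigl(\int_0^1 e^{st}\,ds\Bigr)^{-1},
\]
since $(e^t-1)/t=\int_0^1 e^{st}\,ds$. Let $h(t)=\int_0^1 e^{st}\,ds$. Then $\log h$ is convex: it is a log-Laplace transform, and Hölder gives $h''h\ge (h')^2$. From this, $1/h=e^{-\log h}$ is log-concave, which is not enough. So I will instead compute directly:
\[
(1/h)''=\frac{2(h')^2-h h''}{h^3}.
\]
Positivity then requires $2(h')^2>h h''$, i.e.\ $\mathrm{Var}<\mathrm{Mean}^2$ for the tilted measure on $[0,1]$ with density $\propto e^{st}$. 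This is plausible but needs care.

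As an alternative, I will check $g''>0$ explicitly. Write $g(t)=\frac{t}{2}\bigl(\coth(t/2)-1\bigr)$, so that $g(t)=-\frac t2+\frac t2\coth\frac t2$. The linear part does not affect convexity, so it suffices that $\phi(u)=u\coth u$ is strictly convex. The expansion $u\coth u=1+\sum_{k\ge1}\frac{2u^2}{u^2+k^2\pi^2}$ works, since each term $\frac{u^2}{u^2+a^2}=1-\frac{a^2}{u^2+a^2}$. But $\frac{a^2}{u^2+a^2}$ is not concave everywhere: it has an inflection point. So this route also needs an extra argument.

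The most direct route is to compute $\phi''(u)=\frac{2(u\cosh u-\sinh u)}{\sinh^3 u}$. The numerator and $\sinh^3 u$ share the sign of $u$, and the numerator is positive for $u>0$ because $\tanh u<u$. Hence $\phi''>0$, with value $2/3$ at $u=0$. This is the step I will present carefully.

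Strict convexity then gives equality in Jensen if and only if all $t_i$ with $w_i>0$ coincide. Combined with $\sum_i w_i t_i=0$, this forces $t_i=0$, i.e.\ $x_i=1$ for every $i$ with $w_i>0$.
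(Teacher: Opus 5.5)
Your proposal is correct and follows the paper's proof. The identity $g(-t)=g(t)+t$ combined with $\sum_i w_i t_i=0$ is exactly how the paper obtains the middle equality, and your hyperbolic form $g(t)=-t/2+\phi(t/2)$ is just a symmetrized version of the paper's second-derivative computation: the paper's auxiliary function satisfies $y(e^y+1)-2e^y+2=4e^{y/2}\bigl(u\cosh u-\sinh u\bigr)$ with $u=y/2$, so $g''(y)=\tfrac14\phi''(y/2)$, after which strict convexity and Jensen give the inequality and the equality case as in the paper.
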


\begin{proof}
	Properties (1) and (2) are straightforward. Let us prove (3) here.
	
	Let $x_1,\ldots,x_m>0$ satisfy 
	$
	x_1^{w_1}x_2^{w_2}\cdots x_m^{w_m}=1.
	$
	Then
	\[
	\sum_{i=1}^m w_i \log x_i
	= \log\!\bigl(x_1^{w_1}x_2^{w_2}\cdots x_m^{w_m}\bigr)
	= 0.
	\]
	Hence
	\[
	\begin{aligned}
		\sum_{i=1}^m w_i f(x_i,1)
		&= \sum_{i=1}^m w_i \frac{\log x_i}{x_i-1}
		= \sum_{i=1}^m w_i \left( \frac{1}{x_i-1}+1 \right)\log x_i \\
		&= \sum_{i=1}^m w_i \frac{x_i \log x_i}{x_i-1}
		= \sum_{i=1}^m w_i \frac{\log(1/x_i)}{1/x_i-1}
		= \sum_{i=1}^m w_i f(1/x_i,1).
	\end{aligned}
	\]
	
	Next, consider
	\[
	g(y):=f(e^y,1)=\frac{y}{e^y-1}, \qquad y\in\mathbb{R},
	\]
	with $g(0):=1$ by continuity. The function $g$ is analytic on $\mathbb{R}$. For $y\ne 0$,
	\[
	g'(y)=\frac{(e^y-1)-y e^y}{(e^y-1)^2}, \qquad
	g''(y)=\frac{e^y\bigl(y(e^y+1)-2e^y+2\bigr)}{(e^y-1)^3}.
	\]
	Let
	\[
	h(y)=y(e^y+1)-2e^y+2.
	\]
	Then $h(0)=0$ and for $y\ne 0$,
	\[
	h'(y)=y e^y + 1 - e^y >0.
	\]
	It follows that $h(y)<0$ for $y<0$ and $h(y)>0$ for $y>0$. 
	Hence, for all $y\ne 0$,
	\[
	g''(y)=\frac{e^y h(y)}{(e^y-1)^3}>0.
	\]
	Moreover, by L'H\^{o}pital's rule, $g''(0)=\tfrac{1}{6}>0$. Thus $g$ is strictly convex on $\mathbb{R}$.
	
	Applying Jensen's inequality to $g$, we obtain
	\[
	\begin{aligned}
		\sum_{i=1}^m w_i \frac{\log x_i}{x_i-1}
		&= \sum_{i=1}^m w_i f(x_i,1)
		= \sum_{i=1}^m w_i g(\log x_i) \\
		&\ge g\!\left(\sum_{i=1}^m w_i \log x_i\right)
		= g(0)=1.
	\end{aligned}
	\]
	This proves \eqref{Jensen ineq}. 
	
	Since $g$ is strictly convex, equality holds if and only if $\log x_i$ are all equal for indices with $w_i>0$, that is, $x_i=1$ for all $i$ with $w_i>0$.
\end{proof}

The properties of the function $f(x,y)$ established above will play a central role in the analysis of the Karcher mean and the projection mean iteration \eqref{projection mean algorithm}.    
Let 
\begin{itemize}
	\item 
	$\diag(A)$ denote the diagonal matrix formed from the diagonal entries of $A$,  and
	
	\item 
	$\diag(a_1,\cdots,a_n)$ denote the diagonal matrix with diagonal entries $a_1,\ldots,a_n$.
	
\end{itemize}
We will show that the convergence behavior of the sequence $\{X_k\}$ generated by the iteration \eqref{projection mean algorithm} on $\T_2$, once its diagonal is fixed, reduces to a one-dimensional dynamical system. Here is the reason for the normalization and the constant in the next theorem. Put $D=\diag(d_1,d_2)$, where $d_j=\prod_i(A_i)_{jj}^{w_i}$, and write
\[
D^{-1}A_i=\begin{bmatrix}a_{11}^{(i)}&a_{12}^{(i)}\\0&a_{22}^{(i)}\end{bmatrix},
\qquad \prod_i(a_{jj}^{(i)})^{w_i}=1\quad(j=1,2).
\]
Every solution of the Karcher equation has diagonal $D$. If $E_{12}$ denotes the $(1,2)$ matrix unit, every matrix with this diagonal is uniquely $X(t)=D(I+tE_{12})$. Consequently
\[
X(t)^{-1}X(s)=I+(s-t)E_{12},\qquad
\log(X(t)^{-1}X(s))=(s-t)E_{12}.
\]
The relative logarithms of successive fixed-diagonal iterates are therefore nilpotent and have just one free entry. By \eqref{2x2 upp tri log}, the $(1,2)$-entry of the Karcher residual at $X(t)$ is
\[
\sum_iw_i f(a_{22}^{(i)},a_{11}^{(i)})
\bigl(a_{12}^{(i)}-t a_{22}^{(i)}\bigr)=b-c_0t,
\]
where
\[
\begin{aligned}
	b&:=\sum_iw_i f(a_{22}^{(i)},a_{11}^{(i)})a_{12}^{(i)},\\
	c_0&:=\sum_iw_i f(a_{22}^{(i)},a_{11}^{(i)})a_{22}^{(i)}.
\end{aligned}
\]
Thus $-c_0$ is the slope of the scalar residual, and the iteration has multiplier $1-c_0/c$ on this fixed-diagonal set. Theorem~\ref{thm: 2x2 convergence 1} makes this reduction explicit; the zero $t=b/c_0$ gives the formula in Theorem~\ref{thm: 2x2 Karcher mean}.

\begin{theorem}\label{thm: 2x2 convergence 1}
	Let \(A_1,\ldots,A_m\in \T_2\). 
	Denote
	\begin{align}
		D &:= \diag(d_1,d_2)= \diag (A_1^{w_1}\cdots A_m^{w_m}),
		\\
		D^{-1}A_i &:= \mtx{a_{11}^{(i)} &a_{12}^{(i)} \\[0.5em] 0  &a_{22}^{(i)}},
		\quad i=1,\ldots,m,
		\\ \label{2x2 constant c}
		c_0
		&:=  	\sum_{i=1}^m
		w_i  f\left( a_{22}^{(i)}/a_{11}^{(i)}, \, 1\right ) 
		=
		\sum_{i=1}^m
		w_i \frac{a_{22}^{(i)}}{a_{22}^{(i)}-a_{11}^{(i)}}
		\log \left(\frac{a_{22}^{(i)}}{a_{11}^{(i)}}\right) .
	\end{align}	
	Suppose \(X_0\in \T_2\) satisfies $\diag (X_0)= D$.
	Then the sequence \(\{X_k \}\) generated by the iteration~\eqref{projection mean algorithm}  satisfies  
	\[\diag(X_k)=D,\qquad k= 0, 1, 2, \ldots \]
	and the \((1,2)\)-entries of $X_k^{-1}X_{k+1}$  (resp.  
	$\log \left(X_k^{-1}X_{k+1}\right)$) 
	form a geometric sequence with  ratio \(\left(1-\dfrac{c_0}{c}\right)\). 
\end{theorem}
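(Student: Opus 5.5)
The plan is an induction on $k$ that carries the parametrization $X_k=D(I+t_kE_{12})$ along the iteration. Lemma~\ref{thm: log 2x2 UT} then turns the step \eqref{projection mean algorithm} into an affine recursion for $t_k$.

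\emph{Diagonal is preserved.} Assume $\diag(X_k)=D$ and write $X_k=D(I+t_kE_{12})$. Then
\[
X_k^{-1}A_i=(I-t_kE_{12})D^{-1}A_i=\mtx{a_{11}^{(i)} & a_{12}^{(i)}-t_k a_{22}^{(i)}\\ 0 & a_{22}^{(i)}}.
\]
By Lemma~\ref{thm: log 2x2 UT}, the diagonal of $\log(X_k^{-1}A_i)$ is $\diag(\log a_{11}^{(i)},\log a_{22}^{(i)})$. Since $\prod_i (a_{jj}^{(i)})^{w_i}=1$, the diagonal of $\sum_i w_i\log(X_k^{-1}A_i)$ vanishes. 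Hence the right-hand side of \eqref{projection mean algorithm} is $r_kE_{12}$ for a scalar $r_k$. It follows that $X_k^{-1}X_{k+1}=\exp(r_kE_{12})=I+r_kE_{12}$, so $\diag(X_{k+1})=\diag(X_k)=D$. This closes the induction and gives $t_{k+1}=t_k+r_k$.

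\emph{Scalar recursion.} Lemma~\ref{thm: log 2x2 UT} gives the off-diagonal entry of each logarithm explicitly. Combining it with Lemma~\ref{thm: f fun prop}(1) and (2) in the form $f(a_{22},a_{11})a_{22}=f(a_{22}/a_{11},1)\,a_{22}/a_{11}$ requires care, so I will instead use the direct expression $c_0=\sum_i w_i f(a_{22}^{(i)},a_{11}^{(i)})a_{22}^{(i)}$ from the discussion before the theorem. I will check that this agrees with \eqref{2x2 constant c}: indeed $f(a_{22},a_{11})a_{22}=\frac{a_{22}}{a_{22}-a_{11}}\log(a_{22}/a_{11})$, which is exactly the second expression in \eqref{2x2 constant c}. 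With this, $r_k=\frac1c(b-c_0t_k)$, where $b=\sum_i w_i f(a_{22}^{(i)},a_{11}^{(i)})a_{12}^{(i)}$.

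\emph{Geometric ratio.} The recursion is $t_{k+1}=t_k+\frac1c(b-c_0t_k)$, so
\[
r_{k+1}=\tfrac1c\bigl(b-c_0t_{k+1}\bigr)=\tfrac1c\bigl(b-c_0t_k\bigr)-\tfrac{c_0}{c}r_k=\Bigl(1-\tfrac{c_0}{c}\Bigr)r_k.
\]
The $(1,2)$-entries of $X_k^{-1}X_{k+1}$ and of $\log(X_k^{-1}X_{k+1})$ both equal $r_k$, because the matrix is $I+r_kE_{12}$ with nilpotent logarithm $r_kE_{12}$. Hence both sequences are geometric with ratio $1-c_0/c$.

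The only delicate point is bookkeeping. One must keep the $(1,2)$-entry of $X_k^{-1}A_i$ straight, so that the factor multiplying $t_k$ is $a_{22}^{(i)}$ and not $a_{11}^{(i)}$. One must also handle the case $a_{11}^{(i)}=a_{22}^{(i)}$, which is covered by the convention $f(x,x)=1/x$ and continuity in Lemma~\ref{thm: log 2x2 UT}.
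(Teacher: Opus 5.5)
Your proof is correct and is essentially the paper's argument: the paper also shows the diagonal stays equal to $D$, and then tracks the $(1,2)$-entry $y_k$ of the residual through $X_{k+1}^{-1}A_i=(X_k^{-1}X_{k+1})^{-1}X_k^{-1}A_i$. That is the same computation as your affine recursion $t_{k+1}=t_k+\frac{1}{c}(b-c_0t_k)$ in the coordinate $X_k=D(I+t_kE_{12})$. The first expression $\sum_i w_i f(a_{22}^{(i)}/a_{11}^{(i)},1)$ for $c_0$ matches the one you use only after summing over $i$, by Lemma~\ref{thm: f fun prop}(3) with $x_i=a_{22}^{(i)}/a_{11}^{(i)}$, so working with the second expression as you did is legitimate.
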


\begin{proof}
	By \eqref{projection mean algorithm},
	\[
	\begin{aligned}
		\diag(X_{k+1})
		&=
		\diag(X_k)\exp\!\left(
		\frac{1}{c}\sum_{i=1}^{m}
		w_i\log \bigl(\diag(X_k)^{-1}\diag(A_i)\bigr)
		\right)
		\\
		&=
		\diag(X_k)^{1-\frac{1}{c}}
		\diag(A_1^{w_1}\cdots A_m^{w_m})^{\frac{1}{c}}
		=
		\diag(X_k)^{1-\frac{1}{c}}D^{\frac{1}{c}}.
	\end{aligned}
	\]
	By induction on $k$, we obtain $\diag(X_k)=D$ for all $k\in\mathbb{N}$. Hence each $X_k^{-1}X_{k+1}$ is unipotent upper triangular.
	
	Since $\diag(D^{-1}A_i)=\diag(a_{11}^{(i)},a_{22}^{(i)})$, we may write
	\begin{align} 
		X_k^{-1}A_i
		&=
		\begin{bmatrix}
			a_{11}^{(i)} & a_{12}^{(k,i)}\\
			0 & a_{22}^{(i)}
		\end{bmatrix}, \label{Xk_inv_Ai}
		\\ \label{Xk_inv_Xk+1}
		c\,\log(X_k^{-1}X_{k+1})
		&=
		\sum_{i=1}^m w_i \log\bigl(X_k^{-1}A_i\bigr)
		=
		\begin{bmatrix}
			0 & y_k\\
			0 & 0
		\end{bmatrix}. 
	\end{align}
	Thus
	\[
	X_k^{-1}X_{k+1}
	=
	\begin{bmatrix}
		1 & \dfrac{y_k}{c}\\
		0 & 1
	\end{bmatrix}.
	\]
	Moreover,
	\begin{align}
		\sum_{i=1}^m w_i \log \bigl(X_{k+1}^{-1}A_i\bigr)
		&=
		\sum_{i=1}^m
		w_i \log \!\left(
		(X_k^{-1}X_{k+1})^{-1}(X_k^{-1}A_i)
		\right)
		\nonumber\\
		&=
		\sum_{i=1}^m
		w_i \log \!\left(
		\begin{bmatrix}
			a_{11}^{(i)} &
			a_{12}^{(k,i)}-\dfrac{y_k}{c}a_{22}^{(i)}\\
			0 &
			a_{22}^{(i)}
		\end{bmatrix}
		\right).
		\label{X_k+1 A_i}
	\end{align}
	
	By \eqref{2x2 upp tri log}, \eqref{Xk_inv_Ai}, and \eqref{Xk_inv_Xk+1}, the $(1,2)$-entry of
	$\sum_{i=1}^m w_i \log (X_k^{-1}A_i)$ is
	\begin{equation}\label{y_k}
		y_k
		=
		\sum_{i=1}^m
		w_i \frac{a_{12}^{(k,i)}}{a_{22}^{(i)}-a_{11}^{(i)}}
		\log \!\left(\frac{a_{22}^{(i)}}{a_{11}^{(i)}}\right).
	\end{equation}
	Using \eqref{X_k+1 A_i}, \eqref{2x2 upp tri log}, \eqref{y_k}, and \eqref{2x2 constant c}, the $(1,2)$-entry of
	$\sum_{i=1}^m w_i \log (X_{k+1}^{-1}A_i)$ is
	\begin{align*}
		y_{k+1}
		&=
		\sum_{i=1}^m
		w_i \frac{a_{12}^{(k,i)}-\frac{y_k}{c}a_{22}^{(i)}}{a_{22}^{(i)}-a_{11}^{(i)}}
		\log \!\left(\frac{a_{22}^{(i)}}{a_{11}^{(i)}}\right)
		\\
		&=
		y_k
		-
		\frac{y_k}{c}
		\sum_{i=1}^m
		w_i \frac{a_{22}^{(i)}}{a_{22}^{(i)}-a_{11}^{(i)}}
		\log \!\left(\frac{a_{22}^{(i)}}{a_{11}^{(i)}}\right)
		\\
		&=
		\left(1-\frac{c_0}{c}\right)y_k.
	\end{align*}
	Therefore, the $(1,2)$-entries of $X_k^{-1}X_{k+1}$ (resp. $\log(X_k^{-1}X_{k+1})$) form a geometric sequence with ratio $1-\frac{c_0}{c}$.
	
	Let $x_i=a_{22}^{(i)}/a_{11}^{(i)}$ for $i=1,\ldots,m$. Then $x_i>0$ and
	$x_1^{w_1}\cdots x_m^{w_m}=1$. By Lemma~\ref{thm: f fun prop},
	\[
	c_0
	=
	\sum_{i=1}^m w_i f(x_i,1)
	\ge 1.
	\]
	This completes the proof.
\end{proof}

\begin{remark} The constant $c_0$ depends only on the weights and the normalized diagonal entries $a_{11}^{(i)},a_{22}^{(i)}$, and not on $a_{12}^{(i)}$. As shown above, it is the negative slope of the scalar residual $b-c_0t$, and $1-c_0/c$ is the multiplier of the fixed-diagonal iteration.
	Using the identities
	\[
	(a_{11}^{(1)})^{w_1}\cdots (a_{11}^{(m)})^{w_m}=1=(a_{22}^{(1)})^{w_1}\cdots (a_{22}^{(m)})^{w_m},
	\]
	we see that   $c_0$ admits the following equivalent expressions: 
	\[
	\begin{aligned}
		c_0 
		&=\sum_{i=1}^m w_i f\left (\frac{a_{11}^{(i)}}{a_{22}^{(i)}},1\right ) 
		=\sum_{i=1}^m w_i f(a_{22}^{(i)},a_{11}^{(i)})\, a_{22}^{(i)} 
		\\ 
		&=  \sum_{i=1}^m w_i f\left(\frac{a_{22}^{(i)}}{a_{11}^{(i)}},1\right )
		=
		\sum_{i=1}^m w_i f(a_{22}^{(i)},a_{11}^{(i)})\, a_{11}^{(i)}.
	\end{aligned}
	\]
	Moreover, \eqref{Jensen ineq} implies that $c_0\ge 1$. 
\end{remark}

Theorem \ref{thm: 2x2 convergence 1}   enables us to identify the equilibrium point explicitly, leading to a closed-form expression for the Karcher mean on $\T_2$ as follows.

\begin{theorem}\label{thm: 2x2 Karcher mean}
	The   Karcher mean $G(\omega; A_1,\ldots, A_m)$ on $\T_2$  admits the analytic expression
	\begin{equation}\label{2x2 Karcher mean}
		G(\omega; A_1,\ldots, A_m)
		= D \exp\!\left( 
		\frac{1}{c_0}\sum_{i=1}^{m}w_i 
		\log \left(D^{-1}A_i\right)
		\right)
	\end{equation}
	in which the $D$ in \eqref{2x2 Karcher mean} can be replaced by
	any $X_0\in\T_2$ with $\diag(X_0)=D$. 
	More explicitly, 
	\begin{equation}
		\label{2x2 Karcher mean 2}
		G(\omega; A_1,\ldots, A_m) = \begin{bmatrix}
			d_1 
			&
			\dfrac{d_1\sum_{i=1}^m w_i\, f\!\bigl(a_{22}^{(i)},a_{11}^{(i)}\bigr)\, a_{12}^{(i)}}
			{\sum_{i=1}^m w_i\, f\!\bigl(a_{22}^{(i)},a_{11}^{(i)}\bigr)\,  a_{22}^{(i)} }
			\\[1em]
			0 &
			d_2 
		\end{bmatrix}.
	\end{equation}
\end{theorem}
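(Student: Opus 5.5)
The plan is to show that the right-hand side of \eqref{2x2 Karcher mean} solves the Karcher equation $\sum_i w_i\log(X^{-1}A_i)=0$, and that this solution is unique.

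\textbf{Diagonal.} Any solution $X$ must have $\diag(X)=D$. By Lemma~\ref{thm: log 2x2 UT}, the diagonal of $\log(X^{-1}A_i)$ is $\log\diag(X^{-1}A_i)$. So the diagonal part of the equation reads $\sum_i w_i(\log (A_i)_{jj}-\log x_{jj})=0$, which forces $x_{jj}=d_j$.

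\textbf{Off-diagonal entry.} Parametrize the candidates as $X(t)=D(I+tE_{12})$. As computed before Theorem~\ref{thm: 2x2 convergence 1}, the $(1,2)$-entry of the residual is $b-c_0t$. Since $c_0\ge1>0$ by Lemma~\ref{thm: f fun prop}(3), the unique zero is $t^*=b/c_0$, so the Karcher mean exists, is unique, and equals $D(I+(b/c_0)E_{12})$. Multiplying out gives $(1,2)$-entry $d_1 b/c_0$, which is exactly \eqref{2x2 Karcher mean 2}.

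\textbf{Matching the exponential form.} The diagonal parts of the $\log(D^{-1}A_i)$ are $\diag(\log a^{(i)}_{11},\log a^{(i)}_{22})$. Their weighted sum vanishes because $\prod_i(a^{(i)}_{jj})^{w_i}=1$. By Lemma~\ref{thm: log 2x2 UT}, the $(1,2)$-entry of the weighted sum is $\sum_i w_i f(a_{22}^{(i)},a_{11}^{(i)})a_{12}^{(i)}=b$. Hence
\[
\frac{1}{c_0}\sum_i w_i\log(D^{-1}A_i)=\frac{b}{c_0}E_{12},
\]
which is nilpotent, and its exponential is $I+(b/c_0)E_{12}$. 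This gives \eqref{2x2 Karcher mean}.

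\textbf{Replacing $D$ by $X_0$.} For general $X_0$ with $\diag(X_0)=D$, I would invoke Theorem~\ref{thm: 2x2 convergence 1} with $c=c_0$. The ratio is then $0$, so $y_1=0$, and $X_1=X_0\exp\bigl(\frac1{c_0}\sum_iw_i\log(X_0^{-1}A_i)\bigr)$ already satisfies the Karcher equation. Hence $X_1=G$ by uniqueness.

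The only point needing care is this last step. Theorem~\ref{thm: 2x2 convergence 1} defines $y_{k+1}$ as the residual at $X_{k+1}$, so $y_1=0$ really does mean the full residual vanishes: its diagonal is zero because $\diag(X_1)=D$. With that checked, and uniqueness coming from the linear scalar equation in $t$, the argument is routine.
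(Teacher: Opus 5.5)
Your proposal is correct and follows essentially the same route as the paper. The ingredients match: the diagonal is forced to be $D$; on the matrices $D(I+tE_{12})$ the residual is $(b-c_0t)E_{12}$, whose unique zero is $t=b/c_0$; the nilpotent exponential is computed directly; and Theorem~\ref{thm: 2x2 convergence 1} with $c=c_0$ gives one-step convergence from any $X_0$ with $\diag(X_0)=D$. The only difference is the order: the paper first reads off the exponential form from the one-step iteration at $X_0=D$ and then uses the scalar residual for uniqueness, which is immaterial.
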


\begin{proof} By Theorem  \ref{thm: 2x2 convergence 1},
	when $c=c_0$ and $X_0=D$ in \eqref{projection mean algorithm}, the sequence $\{y_k \}$ in \eqref{y_k} is a geometric sequence with  ratio $0$, so that $y_1=y_2=\cdots =0$. Therefore, $X_1=X_2=\cdots$, and 
	the   Karcher mean has the analytic expression \eqref{2x2 Karcher mean}. Moreover, we can replace $D$ by any $X_0\in\T_2$ with the same diagonal as $D$ and get the   Karcher mean by one step of iteration  \eqref{projection mean algorithm}. 
	
	Every solution must have diagonal $D$, and on this diagonal the residual is $(b-c_0t)E_{12}$, as computed before Theorem~\ref{thm: 2x2 convergence 1}. Since $c_0\ge1$, $t=b/c_0$ is its unique zero. This also proves uniqueness and, because $f$ is analytic on $(0,\infty)^2$, analytic dependence on the input entries.
	
	Using \eqref{2x2 constant c}, 
	we evaluate the RHS of \eqref{2x2 Karcher mean} explicitly,
	\begin{eqnarray*}
		G(\omega; A_1,\ldots, A_m)
		&=& D\exp \!\left( 
		\frac{1}{c_0}
		\sum_{i=1}^{m}
		w_i \mtx{\log (a_{11}^{(i)}) &\frac{a_{12}^{(i)}\log\left(a_{22}^{(i)} / a_{11}^{(i)} \right) }{a_{22}^{(i)}-a_{11}^{(i)}}\\[1em]
			0 & \log (a_{22}^{(i)}) } \right)
		\\ &=& D\exp \!\left(  \frac{1}{c_0} \mtx{\sum_{i=1}^{m} w_i\log (a_{11}^{(i)}) &\sum_{i=1}^{m} w_i f(a_{22}^{(i)}, a_{11}^{(i)}) \, a_{12}^{(i)} \\[1em] 0 &\sum_{i=1}^{m} w_i \log (a_{22}^{(i)}) } \right)
		\\
		&=& D \mtx{ 
			1 &\frac{1}{c_0} \sum_{i=1}^{m}w_i f(a_{22}^{(i)}, a_{11}^{(i)}) \, a_{12}^{(i)}\\[1em]
			0 &1 }
		\\
		&=& 
		\begin{bmatrix}  \displaystyle 
			d_1   & \dfrac{ d_1 \sum_{i=1}^{m} w_i f(a_{22}^{(i)},a_{11}^{(i)})\, a_{12}^{(i)} }{\sum_{i=1}^{m} w_i f(a_{22}^{(i)},a_{11}^{(i)})\, a_{22}^{(i)} } 
			\\[1em]
			0 &  d_2 
		\end{bmatrix}.
	\end{eqnarray*}
	We obtain the explicit form \eqref{2x2 Karcher mean 2} of the Karcher mean. 
\end{proof}

We are now in a position to analyze the convergence behavior of the projection mean iteration. In particular,  Theorem \ref{thm: 2x2 convergence 1} allows us to determine precisely when the iteration converges to the Karcher mean.

\begin{theorem}\label{thm: 2x2 normalized convergence}
	In the projection mean iteration~\eqref{projection mean algorithm}, suppose $\operatorname{diag}(X_0)=D$ and $X_0 \neq G(\omega;A_1,\ldots,A_m)$. Then the sequence $\{X_k\}$  converges to the Karcher mean $G(\omega;A_1,\ldots,A_m)$ if and only if
	\[
	c > \frac{c_0}{2}.
	\]
	In particular, the following statements hold.
	\begin{enumerate}
		\item The iteration~\eqref{projection mean algorithm} diverges whenever $c \in (0,\,1/2]$.
		
		\item 
		If the weight vector has at least two positive entries, there exists no constant $c>0$ for which the iteration \eqref{projection mean algorithm}, initialized at $X_0=D$, converges for every choice of $A_1,\ldots,A_m\in\T_2$.
	\end{enumerate}
	
\end{theorem}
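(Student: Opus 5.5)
I will reduce everything to the scalar recursion of Theorem~\ref{thm: 2x2 convergence 1}. Since $\diag(X_0)=D$, every iterate has the form $X_k=D(I+t_kE_{12})$. By the discussion preceding Theorem~\ref{thm: 2x2 convergence 1}, the $(1,2)$-entry of the residual $\sum_i w_i\log(X_k^{-1}A_i)$ equals $y_k=b-c_0t_k$. The iteration then reads $t_{k+1}=t_k+y_k/c$, and hence
\[
t_{k+1}-\frac{b}{c_0}=\Bigl(1-\frac{c_0}{c}\Bigr)\Bigl(t_k-\frac{b}{c_0}\Bigr).
\]
This is just the geometric statement of Theorem~\ref{thm: 2x2 convergence 1}, rewritten for the error. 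By Theorem~\ref{thm: 2x2 Karcher mean}, $G(\omega;A_1,\ldots,A_m)=D(I+(b/c_0)E_{12})$. The hypothesis $X_0\neq G$ means that $t_0-b/c_0\neq 0$.

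Since the map $t\mapsto D(I+tE_{12})$ is a homeomorphism onto its image, $X_k\to G$ if and only if $(1-c_0/c)^k(t_0-b/c_0)\to 0$. Because the initial error is nonzero, this happens if and only if $|1-c_0/c|<1$. Since $c,c_0>0$, we have $1-c_0/c<1$ automatically, so the condition is $1-c_0/c>-1$, that is, $c>c_0/2$. If $c\le c_0/2$, the ratio is $\le -1$. The errors then either alternate in sign with constant magnitude (at ratio $-1$) or grow without bound. In both cases there is no convergence, and in fact no limit at all.

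For (1), I use $c_0\ge 1$ from Lemma~\ref{thm: f fun prop}(3). If $c\le 1/2$, then $c\le c_0/2$, so the iteration diverges. This requires some $X_0\neq G$ with diagonal $D$, which always exists; I read the claim in that sense.

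For (2), the main work is to show that $c_0$ is unbounded over inputs once two weights are positive. Fix any $c>0$ and say $w_1,w_2>0$. Take $A_1=\diag(1,s)$ and $A_2=\diag(1,s^{-w_1/w_2})$, with all other $A_i=I$, and put a nonzero $(1,2)$-entry in $A_1$ so that $b\neq 0$. Then $d_2=1$, so $D=I$ and $X_0=I$. Also $b\neq 0$ provided $f(s,1)\cdot(\text{entry})\neq 0$, which holds, so $X_0\neq G$. With $x_i=a^{(i)}_{22}/a^{(i)}_{11}$, we get $c_0=w_1f(s,1)+w_2f(s^{-w_1/w_2},1)+\cdots$. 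As $s\to 0^+$, $f(s,1)=\log s/(s-1)\to\infty$, while the other terms stay positive. Choosing $s$ small makes $c_0>2c$, and the iteration fails to converge by the equivalence above. The only delicate point is making sure the chosen inputs satisfy $X_0=D\neq G$, which the nonzero off-diagonal entry guarantees.
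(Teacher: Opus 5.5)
Your proof is correct and follows the paper's approach. You reduce to the scalar geometric recursion of Theorem~\ref{thm: 2x2 convergence 1}, writing it for the error $t_k-b/c_0$ instead of the increments $y_k$, which makes the identification of the limit as $G$ explicit. You then use $c_0\ge 1$ for (1), and for (2) you build inputs with $D=I$ and $b\neq 0$ in which one diagonal ratio tends to $0$, so $c_0\to\infty$; your family (varying the $(2,2)$-entry with $s\to 0^+$) is a mirror image of the paper's (the $(1,1)$-entry $e^{-s}$ with $s\to\infty$).
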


\begin{proof}
	Suppose    $X_0\ne G(\omega; A_1,\ldots, A_m)$. Then $X_0\ne X_1$ and by
	\eqref{Xk_inv_Xk+1}, $y_0\ne 0$. 
	Thus, the sequence $\{X_k \}$ converges if and only if \(|1-\frac{c_0}{c}|<1\),  that is, if and only if \(c>c_0/2\). 
	\begin{enumerate}
		\item 
		Since $c_0\ge 1$, it follows that $c \le 1/2$ implies $c \le c_0/2$. Hence the iteration \eqref{projection mean algorithm}  diverges for   $c\in (0,1/2]$. 
		
		\item Relabel the indices so that $w_1,w_2>0$. For $s>0$ set
		\[
		r_1=e^{-s},\quad r_2=e^{w_1s/w_2},\quad r_i=1\ (i\ge3),
		\]
		and choose
		\[
		A_1=\begin{bmatrix}r_1&1\\0&1\end{bmatrix},\qquad
		A_i=\begin{bmatrix}r_i&0\\0&1\end{bmatrix}\quad(i\ge2).
		\]
		Then $D=I_2$, $b=w_1f(r_1,1)>0$, and
		$c_0=\sum_iw_if(r_i,1)\ge w_1s/(1-e^{-s})\to\infty$.
		For any prescribed $c>0$, choose $s$ with $c_0>2c$.
		The initial point $X_0=D$ is not the mean, so the iteration does not converge.
		\qedhere 
		
	\end{enumerate}
\end{proof}

Theorem \ref{thm: 2x2 normalized convergence} provides a sharp threshold for convergence in terms of the parameter $c$ when $X_0$ has the same diagonal as the Karcher mean. 
We now extend the analysis to arbitrary initial conditions.

When a sequence of matrices $\{Y_k\}$ converges to $Y$,   the \emph{asymptotic convergence rate} of  $\{Y_k\}$ is defined as 
\begin{equation}
	\limsup_{k\to\infty} \|Y_{k}-Y \|_{1}^{1/k}
\end{equation}
where $\|\cdot\|_1$ denotes the 1-norm.


\begin{theorem}\label{thm: gen conv behavior}
	Let $A_1,\ldots,A_m,X_0\in\T_2$ and $c>0$. Then:
	\begin{enumerate}
		\item If $c>1/2$, the diagonal of $X_k$ converges to $D$; the diagonal entries of $\log(X_k^{-1}X_{k+1})$ are geometric with ratio $1-1/c$.
		\item If $c>c_0/2$, then $X_k\to G(\omega;A_1,\ldots,A_m)$, and the asymptotic convergence rates of both $X_k-G$ and $\log(X_k^{-1}X_{k+1})$ are at most
		\begin{equation}\label{bound_asy_conv_rate}
			\max\{|1-1/c|,\,|1-c_0/c|\}.
		\end{equation}
		\item If $0<c\le1/2$ and $X_0\ne G(\omega;A_1,\ldots,A_m)$, the sequence $\{X_k\}$ does not converge.
	\end{enumerate}
\end{theorem}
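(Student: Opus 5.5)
The plan is to split the dynamics of $X_k$ into a diagonal part, which evolves autonomously, and a scalar off-diagonal part, which satisfies a non-autonomous affine recursion whose coefficients converge to those of the fixed-diagonal system of Theorem~\ref{thm: 2x2 convergence 1}.

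\textbf{Diagonal part.} The computation at the start of the proof of Theorem~\ref{thm: 2x2 convergence 1} gives, for an arbitrary $X_0$,
\[
\diag(X_{k+1})=\diag(X_k)^{1-\frac1c}D^{\frac1c}.
\]
Put $u_k:=\log\diag(X_k)-\log D$. Then $u_{k+1}=(1-1/c)u_k$. Since the diagonal entries of $\log(X_k^{-1}X_{k+1})$ equal $u_{k+1}-u_k=-u_k/c$, they are geometric with ratio $1-1/c$. If $c>1/2$, then $|1-1/c|<1$, so $\diag(X_k)\to D$ geometrically. This proves (1).

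\textbf{Part (3).} Let $0<c\le 1/2$, so that $1-1/c\le -1$.
\begin{itemize}
\item If $\diag(X_0)\ne D$, then $u_k=(1-1/c)^ku_0$ either blows up or, when $c=1/2$, alternates in sign with constant nonzero size. Either way it does not converge.
\item If $\diag(X_0)=D$ and $X_0\ne G$, nonconvergence is exactly Theorem~\ref{thm: 2x2 normalized convergence}(1).
\end{itemize}

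\textbf{Off-diagonal part.} Write $X_k=\Delta_k(I+t_kE_{12})$ with $\Delta_k=\diag(X_k)$. Then $X_k^{-1}A_i$ has diagonal $\Delta_k^{-1}\diag(A_i)$ and an $(1,2)$-entry that is affine in $t_k$. By Lemma~\ref{thm: log 2x2 UT}, the $(1,2)$-entry of $\sum_iw_i\log(X_k^{-1}A_i)$ has the form $y_k=\beta(\Delta_k)-\gamma(\Delta_k)t_k$, where $\beta,\gamma$ are analytic functions of $\Delta_k$ built from $f$.

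Next, $X_{k+1}=X_k\exp(L_k/c)$, where $L_k$ is upper triangular with diagonal $-u_k$ and $(1,2)$-entry $y_k$. The $(1,2)$-entry of $\exp(L_k/c)$ is $(y_k/c)\,\phi(\lambda_k)$, where $\phi$ is the divided difference of $\exp$ at the two diagonal eigenvalues $\lambda_k$ of $L_k/c$; it is analytic and equals $1$ when $\lambda_k=0$. Multiplying out and renormalizing by $\Delta_{k+1}$ gives an affine recursion
\[
t_{k+1}=\alpha_kt_k+\eta_k .
\]
Its coefficients are analytic functions of $u_k$. At $u=0$, i.e.\ $\Delta_k=D$, the recursion must coincide with the fixed-diagonal one from the proof of Theorem~\ref{thm: 2x2 convergence 1}. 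Hence $\alpha(0)=1-c_0/c$, and the fixed point at $u=0$ is $t^*=b/c_0$, the $(1,2)$-parameter of $G$ in Theorem~\ref{thm: 2x2 Karcher mean}. This identification by continuity avoids recomputing the limit constants.

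Set $\rho=|1-1/c|$. By smoothness, $|\alpha_k-(1-c_0/c)|=O(\rho^k)$. Likewise $e_k:=t_k-t^*$ satisfies
\[
e_{k+1}=\alpha_ke_k+\epsilon_k,\qquad \epsilon_k=O(\rho^k).
\]
The key step is an elementary perturbation lemma. If $|1-c_0/c|<1$, then for every $\varepsilon>0$ one has $|\alpha_k|\le|1-c_0/c|+\varepsilon$ for all large $k$. Unrolling the recursion then gives
\[
|e_k|=O\bigl((\max\{\rho,|1-c_0/c|\}+\varepsilon)^k\bigr).
\]
Letting $\varepsilon\to0$ bounds $\limsup|e_k|^{1/k}$ by \eqref{bound_asy_conv_rate}.

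When $c>c_0/2$ we also have $c>1/2$, because $c_0\ge1$; so $\rho<1$ and both rates are below $1$. Since $X_k-G=\Delta_k(I+t_kE_{12})-D(I+t^*E_{12})$ is controlled linearly by $|u_k|$ and $|e_k|$, we get $X_k\to G$ at the claimed rate. The same holds for $\log(X_k^{-1}X_{k+1})$: its diagonal is $-u_k/c$, and its off-diagonal is $y_k/c=\bigl(\beta(\Delta_k)-\gamma(\Delta_k)t_k\bigr)/c$, which vanishes at $(D,t^*)$ and is Lipschitz there. This proves (2).

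\textbf{Main obstacle.} I expect the hardest part to be setting up the normalization $t_k$ so that the recursion is genuinely affine with coefficients smooth in $u_k$, and checking that the limiting multiplier is exactly $1-c_0/c$. I would handle this by the continuity argument above: comparing with Theorem~\ref{thm: 2x2 convergence 1} at $\Delta_k=D$ fixes the limit constants without computing them explicitly.
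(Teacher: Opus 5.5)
Your proposal is correct and follows essentially the paper's proof. Both arguments use the same ingredients: the autonomous diagonal recursion, the normalization $X_k=\Delta_k(I+t_kE_{12})$ (identical to the paper's $p_k,q_k,t_k$ parametrization of $D^{-1}X_k$), an exact affine recursion for $t_k$ whose coefficients converge geometrically, a perturbation estimate for $e_k=t_k-t^*$, and the same case split for part (3). The only difference is that you identify the limiting coefficients $1-c_0/c$ and $b/c$ by evaluating at $u=0$ and appealing to Theorem~\ref{thm: 2x2 convergence 1}, whereas the paper writes $\mu_k$ and $\nu_k$ out explicitly through $H$, $K$ and $\psi$.
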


\begin{proof}
	Write $D^{-1}X_k=\begin{bmatrix}p_k&p_kt_k\\0&q_k\end{bmatrix}$ with $p_k,q_k>0$. Equating diagonal entries in the iteration gives, with $r=1-1/c$,
	\begin{equation}\label{diag_Xk}
		p_k=p_0^{r^k},\qquad q_k=q_0^{r^k}.
	\end{equation}
	This also proves the geometric assertion in (1).
	
	To handle the off-diagonal entry without assuming a fixed diagonal, put
	\[
	H(p,q)=\sum_iw_i f(a_{11}^{(i)}/p,a_{22}^{(i)}/q)\frac{a_{12}^{(i)}}p,
	\qquad
	K(p,q)=\sum_iw_i f(a_{11}^{(i)}/p,a_{22}^{(i)}/q)\frac{a_{22}^{(i)}}q.
	\]
	The weighted logarithmic residual at $X_k$ has entries
	\[
	\sum_iw_i\log(X_k^{-1}A_i)
	=\begin{bmatrix}-\log p_k&H(p_k,q_k)-K(p_k,q_k)t_k\\0&-\log q_k\end{bmatrix}.
	\]
	Define the continuous exponential divided difference
	\[
	\psi(u,v)=\begin{cases}(e^u-e^v)/(u-v),&u\ne v,\\ e^u,&u=v,\end{cases}
	\quad
	h_k=\frac{p_k^{1/c}}c\psi(-\log p_k/c,-\log q_k/c).
	\]
	Direct exponentiation in \eqref{projection mean algorithm} yields the exact affine recurrence
	\begin{equation}\label{affine_tk}
		t_{k+1}=\mu_kt_k+\nu_k,\qquad
		\mu_k=(p_k/q_k)^{1/c}-h_kK(p_k,q_k),\quad
		\nu_k=h_kH(p_k,q_k).
	\end{equation}
	In particular, $H(1,1)=b$, $K(1,1)=c_0$, and the mean has coordinate $t_*=b/c_0$.
	
	Suppose $c>c_0/2$. As $c_0\ge1$, $\rho=|1-1/c|<1$ and $\lambda=1-c_0/c$ satisfies $|\lambda|<1$. If $\rho>0$, \eqref{diag_Xk} and smoothness near $(1,1)$ give
	\[
	\mu_k=\lambda+O(\rho^k),\qquad
	\nu_k=b/c+O(\rho^k).
	\]
	For $e_k=t_k-t_*$, equation \eqref{affine_tk} therefore becomes
	$e_{k+1}=\mu_ke_k+O(\rho^k)$. Given any $a$ with
	$\max\{\rho,|\lambda|\}<a<1$, eventually $|\mu_k|<a$; iteration of this inequality shows $e_k=O(a^k)$ (enlarging $a$ slightly first if necessary). This proves convergence and the claimed bound for $X_k-G$ by letting $a$ decrease to the displayed maximum. If $\rho=0$, then $c=1$, and $p_k=q_k=1$ for $k\ge1$; the fixed-diagonal recurrence gives the same conclusion. The residual is analytic and vanishes at $G$, so
	$\log(X_k^{-1}X_{k+1})=c^{-1}\sum_iw_i\log(X_k^{-1}A_i)$ has the same rate bound.
	
	If $c\le1/2$ and $(p_0,q_0)\ne(1,1)$, \eqref{diag_Xk} yields an unbounded diagonal subsequence when $c<1/2$, or a nonconstant two-cycle when $c=1/2$. Otherwise the diagonal is $D$, and Theorem~\ref{thm: 2x2 normalized convergence} applies unless $X_0=G$.
\end{proof}

We demonstrate the main results of this section in the following   example.

\begin{ex}
	Suppose $m=3$, $\omega=(1/3,1/3, 1/3)$, and 
	\[
	A_1=\begin{bmatrix}
		1000000 & 1\\
		0 & 1
	\end{bmatrix},\quad
	A_2=\begin{bmatrix}
		0.001 & 2\\
		0 & 1
	\end{bmatrix},\quad
	A_3=\begin{bmatrix}
		0.001 & 3\\
		0 & 1
	\end{bmatrix}.
	\]
	Then by   \eqref{2x2 constant c}, 
	\[
	\begin{aligned}
		D &= \diag(A_1^{1/3}A_2^{1/3}A_3^{1/3})=I_2,
		\\
		c_0 &= \frac{1}{3}\frac{\log(10^6)}{10^6-1}+\frac{1}{3}\frac{\log(0.001)}{0.001-1}+\frac{1}{3}\frac{\log(0.001)}{0.001-1}
		\approx 4.6098. 
	\end{aligned}
	\]

	\begin{enumerate}
		\item If we perform the projection mean iteration   \eqref{projection mean algorithm} for 
		\(c=c_0\approx 4.6098\) and  $X_0\in\T_2$ such that $\diag(X_0)=I_2$, for example,
		$X_0=\begin{bmatrix}
			1 & -3\\
			0 & 1
		\end{bmatrix}$, 
		then  
		\[
		X_{1}=X_{2}=X_3=\cdots=G(\omega;A_1,A_2,A_3)
		\approx\begin{bmatrix}1&2.5\\0&1\end{bmatrix} 
		\]
		which is  the Karcher mean $G(\omega;A_1,A_2,A_3).$ 
		
		\item
		If we perform the iteration \eqref{projection mean algorithm} for
		\(c=2.4>c_0/2\) and an \(X_0\in\T_2\), then the sequence $\{X_k\}$ converges to $G(\omega;A_1,A_2,A_3).$
		For example, let   $X_0=I_2=D$, then a numerical computation in R  shows that
		\[
		\begin{aligned}
			X_1 &=
			\begin{bmatrix}
				1 & 4.8019\\
				0 & 1
			\end{bmatrix},
			&
			X_0^{-1}X_1 &=
			\begin{bmatrix}
				1 & 4.8019\\
				0 & 1
			\end{bmatrix},
			\\[6pt]
			X_2 &=
			\begin{bmatrix}
				1 & 0.3806\\
				0 & 1
			\end{bmatrix},
			& X_1^{-1}X_2
			&=
			\begin{bmatrix}
				1 & -4.4213\\
				0 & 1
			\end{bmatrix},
			\\[6pt]
			X_3 &=
			\begin{bmatrix}
				1 & 4.4514\\
				0 & 1
			\end{bmatrix},
			& X_2^{-1}X_3
			&=
			\begin{bmatrix}
				1 & 4.0709\\
				0 & 1
			\end{bmatrix},
			\\[6pt]
			X_4 &=
			\begin{bmatrix}
				1 & 0.7032\\
				0 & 1
			\end{bmatrix},
			&
			X_3^{-1}X_4 &=
			\begin{bmatrix}
				1 & -3.7482\\
				0 & 1
			\end{bmatrix},
			\\  
			&\vdots & &\vdots 
		\end{aligned}
		\]
		We see that the $(1,2)$-entries of $\{X_k^{-1}X_{k+1}\}$  (resp. $\{\log(X_k^{-1}X_{k+1})\}$) form a convergent geometric sequence with ratio $1-\frac{c_0}{c}\approx -0.9208.$

		\item
		If we perform the iteration \eqref{projection mean algorithm} for
		\(c=2.3<c_0/2\) and certain \(X_0\in\T_2 \), then the sequence diverges.
		For example, let  $X_0=I_2=D$, then a numerical computation in R shows that
		\[
		\begin{aligned}
			X_{1} &=
			\begin{bmatrix}
				1 & 5.0106\\
				0 & 1
			\end{bmatrix},
			&
			X_{0}^{-1}X_{1} &=
			\begin{bmatrix}
				1 & 5.0106\\
				0 & 1 
			\end{bmatrix},
			\\[6pt]
			X_{2} &=
			\begin{bmatrix}
				1 & -0.0213\\
				0 & 1
			\end{bmatrix},
			&
			X_{1}^{-1}X_{2} &=
			\begin{bmatrix}
				1 & -5.0319\\
				0 & 1 
			\end{bmatrix},
			\\[6pt]
			X_{3} &=
			\begin{bmatrix}
				1 & 5.032\\
				0 & 1
			\end{bmatrix},
			&
			X_{2}^{-1}X_{3} &=
			\begin{bmatrix}
				1 & 5.0534\\
				0 & 1 
			\end{bmatrix},
			\\[6pt]
			X_{4} &=
			\begin{bmatrix}
				1 & -0.0428\\
				0 & 1
			\end{bmatrix},
			&
			X_{3}^{-1}X_{4} &=
			\begin{bmatrix}
				1 & -5.0749\\
				0 & 1 
			\end{bmatrix},
			\\ &\vdots & &\vdots 
		\end{aligned}
		\]
		The $(1,2)$-entries of $\{X_k^{-1}X_{k+1}\}$ (resp. $\{\log(X_k^{-1}X_{k+1})\}$) form a divergent geometric sequence with ratio $1-\frac{c_0}{c}\approx -1.0043.$ 
		
	\end{enumerate}
\end{ex}

\section{The Log--Euclidean mean and the Lie--Trotter formula  on $\T_2$}

The explicit form \eqref{2x2 Karcher mean 2} of
the Karcher mean on $\T_2$ can be used to derive the corresponding Lie--Trotter formula. Let $E_{12}$ be the $2\times 2$ matrix with $1$ in the  $(1,2)$-entry and $0$ elsewhere. 

\begin{lem}
	For $A=\mtx{a_{ij}}\in\T_2$, $z\in\R$, and $p\in\R$,
	\begin{equation}\label{2x2 upp tri power} 
		(A+z E_{12})^{p} = A^{p}+  z  \, \frac{pf(a_{11}, a_{22})}{f(a_{11}^{p}, a_{22}^{p})} \,  E_{12} =
		A^{p}+  z \,  \frac{a_{11}^{p} - a_{22}^{p}}{a_{11} - a_{22}} \,  E_{12}. 
	\end{equation}
\end{lem}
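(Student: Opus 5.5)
We will prove the formula by writing $A^{p}=\exp(p\log A)$ and using Lemma~\ref{thm: log 2x2 UT} together with the explicit exponential of a $2\times2$ upper triangular matrix. Since $A+zE_{12}$ again has diagonal entries $a_{11},a_{22}>0$, it lies in $\T_2$, so both sides are well defined. The key observation is that the $(1,2)$-entry of $(A+zE_{12})^p$ is affine in that of $A+zE_{12}$, with a coefficient depending only on the diagonal.

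First, for any $B=\mtx{b_{11}&b_{12}\\0&b_{22}}\in\T_2$, Lemma~\ref{thm: log 2x2 UT} gives
\[
\log B=\mtx{\log b_{11}& f(b_{11},b_{22})\,b_{12}\\0&\log b_{22}}.
\]
Next we record the exponential of a $2\times2$ upper triangular matrix $\mtx{u&v\\0&w}$: it equals $\mtx{e^u& \psi(u,w)v\\0&e^w}$, where $\psi(u,w)=(e^u-e^w)/(u-w)$ for $u\ne w$ and $\psi(u,u)=e^u$. This follows by a computation parallel to the proof of Lemma~\ref{thm: log 2x2 UT}: diagonalize when $u\neq w$, and write $uI+vE_{12}$ when $u=w$, using $E_{12}^2=0$ and commutativity.

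Applying both facts with $u=p\log b_{11}$ and $w=p\log b_{22}$, we get that the $(1,2)$-entry of $B^p=\exp(p\log B)$ equals
\[
\psi(p\log b_{11},p\log b_{22})\,p\,f(b_{11},b_{22})\,b_{12}.
\]
For $b_{11}\ne b_{22}$ and $p\neq 0$, this simplifies to $\dfrac{b_{11}^p-b_{22}^p}{b_{11}-b_{22}}\,b_{12}$, because $\psi(p\log x,p\log y)=1/f(x^p,y^p)$. The cases $b_{11}=b_{22}$ and $p=0$ follow by continuity, or by direct checking: the quotient becomes $p\,b_{11}^{p-1}$, respectively $0$. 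Since the diagonal of $B^p$ is $\diag(b_{11}^p,b_{22}^p)$, the entries of $B^p$ are linear in $b_{12}$ once the diagonal is fixed.

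Now apply this with $B=A+zE_{12}$, so that $b_{12}=a_{12}+z$, and subtract the corresponding expression for $A^p$. The diagonal parts cancel, and the $(1,2)$ difference is $z\,\frac{a_{11}^p-a_{22}^p}{a_{11}-a_{22}}$. The first displayed form follows from the identity $\frac{p f(a_{11},a_{22})}{f(a_{11}^p,a_{22}^p)}=\frac{a_{11}^p-a_{22}^p}{a_{11}-a_{22}}$, which is immediate from the definition of $f$ since $\log(a_{11}^p/a_{22}^p)=p\log(a_{11}/a_{22})$.

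The only delicate point is the degenerate case $a_{11}=a_{22}$ (and $p=0$, where $f(a_{11}^p,a_{22}^p)$ is evaluated at $(1,1)$, which is fine). These are handled by the continuous extensions of $f$ and $\psi$, and we expect no real obstacle there.
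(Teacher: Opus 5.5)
Your proposal is correct and follows essentially the paper's route: apply Lemma~\ref{thm: log 2x2 UT} to see that $\log(A+zE_{12})$ is $\log A$ plus $zf(a_{11},a_{22})E_{12}$, multiply by $p$, and exponentiate using the $2\times 2$ exponential formula, whose $(1,2)$ coefficient $\psi(u,w)$ is exactly the paper's $1/f(e^{u},e^{w})$. The only differences are minor: you verify the exponential formula directly rather than reading it off as the inverse of the logarithm formula, and you explicitly address the case $a_{11}=a_{22}$, where the second displayed form must be read as $p\,a_{11}^{p-1}$ (a point the paper leaves implicit).
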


\begin{proof} By \eqref{2x2 upp tri log}, we have
	\[
	\log(A+zE_{12}) = \log(A) + z f(a_{11},a_{22}) E_{12}.
	\]
	\eqref{2x2 upp tri log} also implies that for any $2\times 2$ upper triangular matrix $B=\mtx{b_{ij}}$ and $z\in\R$, 
	\begin{equation} \label{2x2 upp tri exp}
		\exp(B+z E_{12}) =\exp(B)+ \frac{z}{f(e^{b_{11}}, e^{b_{22}}) } E_{12}.
	\end{equation}
	Therefore, for $p\in\R$,
	\[
	\begin{aligned}
		(A+z E_{12})^{p}
		&= \exp\left(p\log(A+z E_{12}) \right)
		=  \exp\left(p\log(A) + p z f(a_{11},a_{22}) E_{12} \right)
		\\
		&= \exp (p\log(A))+ \frac{pz f(a_{11},a_{22})}{f(a_{11}^{p}, a_{22}^{p}) }  E_{12}
		=
		A^{p}+  z \frac{a_{11}^{p} - a_{22}^{p}}{a_{11} - a_{22}} E_{12}.
		\qquad \qedhere 
	\end{aligned}\]
\end{proof}

Given a probability vector $\omega=(w_1,\ldots,w_m)$ and matrices $A_1,\ldots,A_m\in\T_2$, the (weighted) \emph{log--Euclidean mean} 
of $A_1,\ldots,A_m$ is defined as
\begin{equation}\label{LE mean}
	LE(\omega; A_1,\ldots,A_m) 
	= \exp\left(\sum_{i=1}^{m} w_i\log(A_i) \right). 
\end{equation}
Denote 
\begin{equation}\label{A_i}
	A_i
	=\mtx{\widetilde{a}_{11}^{(i)} &\widetilde{a}_{12}^{(i)}\\ 0 &\widetilde{a}_{22}^{(i)}} = \diag(A_i) +\widetilde{a}_{12}^{(i)} \,  E_{12}. 
\end{equation}
The explicit form of log--Euclidean mean is described as follows. 

\begin{theorem}\label{thm: LE mean}
	On $\T_2$, the  log--Euclidean mean  
	of $A_1,\ldots,A_m\in\T_2$ is given by
	\begin{equation}\label{LE mean 2}
		LE(\omega; A_1,\ldots,A_m) 
		\, = \, 
		D + \frac{\sum_{i=1}^{m} w_i f(\widetilde{a}_{11}^{(i)}, \widetilde{a}_{22}^{(i)}) \,  \widetilde{a}_{12}^{(i)} }{f(d_1,d_2)} \,  E_{12}.
	\end{equation}
\end{theorem}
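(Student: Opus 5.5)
The proof is a direct computation that uses Lemma~\ref{thm: log 2x2 UT} for the logarithm and its exponential counterpart \eqref{2x2 upp tri exp}. First I apply \eqref{2x2 upp tri log} to each $A_i$ written as in \eqref{A_i}. This gives
\[
\log(A_i)=\log\bigl(\diag(A_i)\bigr)+f\bigl(\widetilde a_{11}^{(i)},\widetilde a_{22}^{(i)}\bigr)\,\widetilde a_{12}^{(i)}\,E_{12},
\]
where I use the symmetry $f(x,y)=f(y,x)$ from Lemma~\ref{thm: f fun prop}(1) to write the coefficient in this order. Taking the weighted sum gives
\[
\sum_{i=1}^m w_i\log(A_i)=\diag\Bigl(\sum_i w_i\log\widetilde a_{11}^{(i)},\ \sum_i w_i\log\widetilde a_{22}^{(i)}\Bigr)+S\,E_{12},
\qquad S:=\sum_{i=1}^m w_i f\bigl(\widetilde a_{11}^{(i)},\widetilde a_{22}^{(i)}\bigr)\widetilde a_{12}^{(i)} .
\]

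Next I identify the diagonal part. Since $D=\diag(A_1^{w_1}\cdots A_m^{w_m})$ and the diagonal of a product of upper triangular matrices is the product of their diagonals, we have $d_j=\prod_i(\widetilde a_{jj}^{(i)})^{w_i}$. So the diagonal part of the sum equals $\log D=\diag(\log d_1,\log d_2)$.

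Then I exponentiate using \eqref{2x2 upp tri exp} with $B=\log D$ and $z=S$. Here $\exp(B)=D$ and $e^{b_{jj}}=d_j$, so
\[
LE(\omega;A_1,\ldots,A_m)=\exp(\log D+S E_{12})=D+\frac{S}{f(d_1,d_2)}E_{12},
\]
which is exactly \eqref{LE mean 2}.

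There is no real obstacle. The only point needing care is \eqref{2x2 upp tri exp} itself, which I take as already established in the preceding lemma's proof. It is simply the inverse of \eqref{2x2 upp tri log}: the logarithm of $\exp(B)+\frac{z}{f(e^{b_{11}},e^{b_{22}})}E_{12}$ has off-diagonal entry $f(e^{b_{11}},e^{b_{22}})\cdot \frac{z}{f(e^{b_{11}},e^{b_{22}})}=z$, and $\log$ is injective on $\T_2$. The case $d_1=d_2$ is covered by the convention $f(x,x)=1/x$.
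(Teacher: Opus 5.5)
Your proposal is correct and follows the paper's proof: you apply Lemma~\ref{thm: log 2x2 UT} to each $A_i$, recognize the diagonal part of $\sum_i w_i\log(A_i)$ as $\log D$, and exponentiate with \eqref{2x2 upp tri exp}. Your extra remarks on the symmetry of $f$, on \eqref{2x2 upp tri exp} as the inverse of the logarithm formula, and on the case $d_1=d_2$ are accurate details that the paper leaves implicit.
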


\begin{proof} By \eqref{2x2 upp tri log} and \eqref{2x2 upp tri exp},
	\begin{eqnarray*}
		LE(\omega; A_1,\ldots,A_m)
		&=& \exp \left(\sum_{i=1}^{m} w_i \log(A_i) \right)
		\\
		&=&  \exp \left(\sum_{i=1}^{m} w_i \left(\log(\diag(A_i) )+f(\widetilde{a}_{11}^{(i)}, \widetilde{a}_{22}^{(i)}) \, \widetilde{a}_{12}^{(i)}  \,  E_{12}   \right)\right)
		\\
		&=& \exp \left(\log(D)+ \left(\sum_{i=1}^{m} w_if(\widetilde{a}_{11}^{(i)}, \widetilde{a}_{22}^{(i)})  \, \widetilde{a}_{12}^{(i)}\right)  \,  E_{12}  \right)
		\\
		&=&
		D + \frac{\sum_{i=1}^{m} w_i f(\widetilde{a}_{11}^{(i)}, \widetilde{a}_{22}^{(i)}) \,  \widetilde{a}_{12}^{(i)} }{f(d_1,d_2)}  \,  E_{12}. 
	\end{eqnarray*}
	We get \eqref{LE mean 2}. 
\end{proof}

The Lie--Trotter  formula \cite{Trotter1959} on a Lie group $G$ can be expressed as:
\[
\lim_{p\to 0} (A_1^{w_1 p}\cdots A_m^{w_m p})^{1/p} = LE(\omega; A_1,\ldots,A_m),
\quad A_1,\ldots, A_m\in G. 
\]
The formula can be derived from the Baker–Campbell–Hausdorff formula. It has been extended to products and means on the cone $\P$ of positive definite matrices (cf. \cite{Kato1978, Araki1990, ANDO1994, AHN2007, BHATIA2012, HIAI2012, BHATIA2019B}.)  Here we prove the Lie--Trotter  formula for the Karcher mean on $\T_2$. 

\begin{theorem}[The Lie--Trotter Formula] \label{thm: Lie-Trotter}
	For a probability vector $\omega=(w_1,\ldots,w_m)$ and matrices $A_1,\ldots,A_m\in\T_2$, we have
	\begin{equation}
		\lim_{p\to 0} G(\omega; A_1^{p},\ldots,A_m^{p})^{1/p} 
		= LE(\omega; A_1,\ldots,A_m). 
	\end{equation}
\end{theorem}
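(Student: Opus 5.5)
The plan is to compute both sides explicitly, using the closed form \eqref{2x2 Karcher mean 2} for the Karcher mean, the power formula \eqref{2x2 upp tri power}, and the log--Euclidean formula \eqref{LE mean 2}. Both sides are upper triangular with positive diagonal. The diagonals match trivially: the diagonal of $G(\omega;A_1^p,\ldots,A_m^p)$ is $D^p$, since the diagonal of $A_i^p$ is $\diag(A_i)^p$. Its $1/p$-th power therefore has diagonal $D=\diag(d_1,d_2)$, which is also the diagonal of $LE$. Everything then reduces to a scalar limit for the $(1,2)$-entry.

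First, write $A_i^p$ via \eqref{2x2 upp tri power} with $A=\diag(A_i)$ and $z=\widetilde a_{12}^{(i)}$. This gives $(1,2)$-entry $\widetilde a_{12}^{(i)}\,\phi_p(\widetilde a_{11}^{(i)},\widetilde a_{22}^{(i)})$, where $\phi_p(x,y)=(x^p-y^p)/(x-y)$ (interpreted as $px^{p-1}$ when $x=y$). Normalizing by $D^p$, the inputs to \eqref{2x2 Karcher mean 2} are
\[
a_{11}^{(i)}=(\widetilde a_{11}^{(i)}/d_1)^p,\qquad a_{22}^{(i)}=(\widetilde a_{22}^{(i)}/d_2)^p,\qquad a_{12}^{(i)}=\widetilde a_{12}^{(i)}\phi_p(\widetilde a_{11}^{(i)},\widetilde a_{22}^{(i)})/d_1^p.
\]
Then the $(1,2)$-entry of $G_p:=G(\omega;A_1^p,\ldots,A_m^p)$ is $g_p=d_1^p\,b_p/c_{0,p}$, with $b_p,c_{0,p}$ the corresponding sums. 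Applying \eqref{2x2 upp tri power} once more, with $A=D^p$, $z=g_p$ and exponent $1/p$, the $(1,2)$-entry of $G_p^{1/p}$ is $g_p\,(d_1-d_2)/(d_1^p-d_2^p)$.

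Next I take asymptotics as $p\to0$. Since $a_{jj}^{(i)}=1+O(p)$, the homogeneity and continuity of $f$ (Lemma \ref{thm: f fun prop}) give $f(a_{22}^{(i)},a_{11}^{(i)})\to f(1,1)=1$, so $c_{0,p}\to1$. On the other side,
\[
\phi_p(x,y)=p\,f(x,y)+o(p),\qquad \frac{d_1-d_2}{d_1^p-d_2^p}=\frac{1}{p\,f(d_1,d_2)}+o(1/p).
\]
Hence $d_1^p b_p= p\sum_i w_i f(\widetilde a_{11}^{(i)},\widetilde a_{22}^{(i)})\widetilde a_{12}^{(i)}+o(p)$, the factors of $p$ cancel, and the limit of the $(1,2)$-entry is exactly the coefficient in \eqref{LE mean 2}.

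The main obstacle is the bookkeeping with degenerate cases, namely $d_1=d_2$ or $\widetilde a_{11}^{(i)}=\widetilde a_{22}^{(i)}$. I will handle these by writing $\phi_p(x,y)=p\,x^{p}y^{p}\ldots$ uniformly as $\phi_p(x,y)=\int_0^1 p\,(tx+(1-t)y)^{p-1}\,dt$, or alternatively via the identity $\phi_p(x,y)=p f(x,y)/f(x^p,y^p)$ already present in \eqref{2x2 upp tri power}. With that identity, $\phi_p/p=f(x,y)/f(x^p,y^p)\to f(x,y)/f(1,1)=f(x,y)$ by joint continuity of $f$, and this handles all cases at once. Negative $p$ needs no separate treatment, since the same continuity argument applies.
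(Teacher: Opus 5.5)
Your proposal is correct and follows the paper's proof essentially step for step. Both arguments normalize $A_i^p$ by $D^p$, apply the closed form \eqref{2x2 Karcher mean 2}, and take the $1/p$-power via \eqref{2x2 upp tri power} to pick up the factor $(d_1-d_2)/(d_1^p-d_2^p)$; both then pass to the limit using $f\to 1$ on the normalized diagonal and $(x^p-y^p)/p\to f(x,y)$. Your identity $\phi_p(x,y)=p\,f(x,y)/f(x^p,y^p)$ also handles the degenerate cases $d_1=d_2$ or $\widetilde a_{11}^{(i)}=\widetilde a_{22}^{(i)}$ uniformly, which the paper's displayed limit leaves implicit.
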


\begin{proof}
	By \eqref{2x2 upp tri power},
	\begin{eqnarray*}
		A_i^{p} 
		&=& \diag(A_i) ^{p} + 
		\frac{(\widetilde{a}_{11}^{(i)})^{p} - (\widetilde{a}_{22}^{(i)})^{p}}
		{\widetilde{a}_{11}^{(i)} - \widetilde{a}_{22}^{(i)}}
		\, \widetilde{a}_{12}^{(i)}\, E_{12},
		\\
		D^{-p} A_i^{p}
		&=& D^{-p} \diag(A_i) ^{p}
		+ d_1^{-p} \widetilde{a}_{12}^{(i)}
		\frac{(\widetilde{a}_{11}^{(i)})^{p} - (\widetilde{a}_{22}^{(i)})^{p}}
		{\widetilde{a}_{11}^{(i)} - \widetilde{a}_{22}^{(i)}}\, E_{12}.
	\end{eqnarray*}
	Using \eqref{2x2 Karcher mean 2} and  
	$D^{-1}A_i= \diag(a_{11}^{(i)}, a_{22}^{(i)}) + a_{12}^{(i)} \, E_{12}$,
	we get
	\begin{eqnarray*}
		G(\omega;A_1^{p},\ldots,A_m^{p})
		&=&
		D^{p}
		+
		\frac{
			\sum_{i=1}^m w_i\, 
			f\!\left( (a_{22}^{(i)})^{p}, (a_{11}^{(i)})^{p}\right)\,
			\frac{(\widetilde{a}_{11}^{(i)})^{p} - (\widetilde{a}_{22}^{(i)})^{p}}
			{\widetilde{a}_{11}^{(i)} - \widetilde{a}_{22}^{(i)}}
			\, \widetilde{a}_{12}^{(i)}
		}{
			\sum_{i=1}^m w_i\,
			f\!\left( (a_{22}^{(i)})^{p}, (a_{11}^{(i)})^{p}\right)\,
			(a_{22}^{(i)})^{p}
		}
		\,E_{12},
		\\
		G(\omega;A_1^{p},\ldots,A_m^{p})^{1/p}
		&=& 
		D+ \frac{d_1-d_2}{d_1^{p}-d_2^{p}} \frac{
			\sum_{i=1}^m w_i\, 
			f\!\left( (a_{22}^{(i)})^{p}, (a_{11}^{(i)})^{p}\right)\,
			\frac{(\widetilde{a}_{11}^{(i)})^{p} - (\widetilde{a}_{22}^{(i)})^{p}}
			{\widetilde{a}_{11}^{(i)} - \widetilde{a}_{22}^{(i)}}
			\, \widetilde{a}_{12}^{(i)}
		}{
			\sum_{i=1}^m w_i\,
			f\!\left( (a_{22}^{(i)})^{p}, (a_{11}^{(i)})^{p}\right)\,
			(a_{22}^{(i)})^{p}
		}
		\,E_{12}.
	\end{eqnarray*}
	When $p\to 0$, we have 
	\[
	(a_{22}^{(i)})^{p}\to 1,\quad
	f\!\left( (a_{22}^{(i)})^{p}, (a_{11}^{(i)})^{p}\right)\to 1,
	\quad  
	\frac{x^p-y^p}{p}\to \log(x/y)
	\text{ for } x,y>0.
	\] 
	Therefore,
	\[
	\begin{aligned}
		\lim_{p\to 0} G(\omega;A_1^{p},\ldots,A_m^{p})^{1/p}
		& = D + 
		\frac{d_1-d_2}{\log(d_1/d_2)} \frac{
			\sum_{i=1}^m w_i\, 
			\frac{\log(\widetilde{a}_{11}^{(i)}/\widetilde{a}_{22}^{(i)})}
			{\widetilde{a}_{11}^{(i)} - \widetilde{a}_{22}^{(i)}}
			\, \widetilde{a}_{12}^{(i)}
		}{
			\sum_{i=1}^m w_i
		}
		\,E_{12}
		\\
		&= D + 
		\frac{
			\sum_{i=1}^m w_i\, 
			f(\widetilde{a}_{11}^{(i)},\widetilde{a}_{22}^{(i)})
			\, \widetilde{a}_{12}^{(i)}
		}{
			f(d_1,d_2)
		}
		\,E_{12}
		\\
		&= 
		LE(\omega; A_1,\ldots,A_m).
	\end{aligned}
	\]
	This completes the proof. 
\end{proof}

Theorem \ref{thm: Lie-Trotter} shows that the Karcher mean on $\T_2$ is compatible with the log-Euclidean mean under infinitesimal scaling.

\section{The Karcher mean on $\T_n$}

While the explicit formula obtained for $\T_2$ does not extend directly
to higher dimensions, the reduction to principal submatrices reveals that
the essential dynamical features persist on $\T_n$ for $n\ge 3$. In particular, the convergence behavior 
of the projection mean iteration remains a fundamentally local phenomenon
arising from $2 \times 2$ interactions.

For $1 \le p \le q \le n$, let $A[p:q]$ denote the principal submatrix of $A$ formed by the rows and columns indexed by $\{p,p+1,\ldots,q\}$.

\begin{lemma}\label{thm: submatrix iteration}
	Let $A_1,\ldots, A_m, X_0\in\T_n$.
	Let $\{X_k\}$ be the sequence generated by  the projection mean iteration
	\eqref{projection mean algorithm}. 
	\begin{enumerate}
		\item For any $p, q\in\{1,\ldots,n\}$ with $p\le q$, the iteration \eqref{projection mean algorithm} applied to $A_1[p:q],\ldots, A_m[p:q]$ with   initial matrix 
		$X_0[p:q]$ generates the sequence $\{X_k[p:q]\}$.
		\item If  $1\le p\le q\le n$ and 
		the iteration \eqref{projection mean algorithm} applied to $A_1[p:q],\ldots, A_m[p:q]$ with   initial matrix 
		$X_0[p:q]$ diverges, then  the sequence $\{X_k\}$ diverges.
	\end{enumerate}
	
\end{lemma}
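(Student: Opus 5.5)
The key fact is that taking a principal submatrix $A\mapsto A[p:q]$ with consecutive indices is a group homomorphism on upper triangular matrices, and it commutes with $\log$ and $\exp$. Part (1) then follows by induction on $k$, and part (2) is immediate from (1).

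\emph{Step 1: the compression is a homomorphism.} For upper triangular $A,B$ we have
\[
(AB)_{ij}=\sum_{i\le l\le j}a_{il}b_{lj}.
\]
If $p\le i\le j\le q$, every summation index satisfies $p\le l\le q$. Hence $(AB)[p:q]=A[p:q]\,B[p:q]$. The same reasoning gives $A^{-1}[p:q]=(A[p:q])^{-1}$ for $A\in\T_n$, by compressing the identity $AA^{-1}=I$. The map $\pi:A\mapsto A[p:q]$ is also linear, so it is an algebra homomorphism on upper triangular matrices.

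\emph{Step 2: $\pi$ commutes with $\exp$ and $\log$.} For $\exp$, apply $\pi$ termwise to the convergent power series, using linearity, multiplicativity and continuity of $\pi$. This gives $\pi(\exp B)=\exp(\pi B)$ for upper triangular $B$.

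For $\log$, take $A\in\T_n$. Here $\log A$ denotes the principal logarithm, which is the unique real upper triangular logarithm with real diagonal; its diagonal entries are the $\log a_{ii}$. Then $\pi(\log A)$ is upper triangular with real diagonal, and
\[
\exp(\pi(\log A))=\pi(\exp\log A)=\pi(A).
\]
So $\pi(\log A)$ is the principal logarithm of $\pi(A)$. Uniqueness holds because the eigenvalues of $A$ are positive, so the principal logarithm is the unique real logarithm with spectrum in the strip $|\operatorname{Im} z|<\pi$.

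Alternatively, one can avoid the uniqueness argument. Write $\log A=\log(dI)+\log(I+N)$ using a suitable scaling, as in Lemma~\ref{thm: log 2x2 UT}, so that the series converges, and then apply $\pi$ termwise. Either route is routine; making the identification of logarithms precise is the only mildly delicate point.

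\emph{Step 3: induction on $k$.} Write the iteration as
\[
X_{k+1}=X_k\exp\!\Big(\frac1c\sum_i w_i\log(X_k^{-1}A_i)\Big).
\]
Apply $\pi$ and use Steps 1--2:
\[
X_{k+1}[p:q]=X_k[p:q]\exp\!\Big(\frac1c\sum_i w_i\log\big(X_k[p:q]^{-1}A_i[p:q]\big)\Big).
\]
This is exactly the iteration \eqref{projection mean algorithm} for the data $A_i[p:q]$. Since the base case $k=0$ is trivial, this proves (1).

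\emph{Step 4: part (2).} Suppose $X_k\to X$. By continuity of $\pi$, $X_k[p:q]\to X[p:q]$. By (1), $\{X_k[p:q]\}$ is the iteration sequence for the compressed data, so that sequence would converge, contradicting the hypothesis. Hence $\{X_k\}$ diverges.
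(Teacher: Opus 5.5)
Your proposal is correct and follows the same route as the paper: compress the iteration to $[p:q]$ using the fact that $A\mapsto A[p:q]$ respects products, inverses and the principal logarithm on upper triangular matrices, then obtain (2) from (1) by continuity of the compression. The paper's proof is a single chain of equalities that uses these compatibilities without comment, and your Steps 1--2 supply the justification for them.
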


\begin{proof}
	Let $p, q\in\{1,\ldots,n\}$ with $p\le q$. For each $k\in\N$, we have
	\[\begin{aligned}
		\log (X_{k}[p:q]^{-1}X_{k+1}[p:q])
		&=	\log (X_{k}^{-1}X_{k+1})[p:q]
		\\ &= 
		\left(\frac{1}{c}\sum_{i=1}^{m} w_i
		\log \left(X_k^{-1}A_i\right)\right)[p:q]
		\\ &= 
		\frac{1}{c}\sum_{i=1}^{m} w_i
		\log \left(X_k[p:q]^{-1}A_i[p:q]\right). 
	\end{aligned}
	\]
	This shows that the sequence $\{X_k[p:q]\}$ is generated by the same iteration applied to 
	$A_1[p:q],\ldots,A_m[p:q]$ with initial matrix $X_0[p:q]$, proving (1).
	
	Assertion (2) follows immediately from (1).
\end{proof}

Theorems \ref{thm: 2x2 Karcher mean}, \ref{thm: 2x2 normalized convergence}, \ref{thm: gen conv behavior}, and Lemma \ref{thm: submatrix iteration} imply the following   results about  the Karcher mean  and the projection mean iteration  on $\T_n$. 

\begin{theorem}
	Let  $n\ge 2$, $m\ge 2$, and $A_1,\ldots, A_m\in\T_n$. 
	Then every solution $X=\mtx{x_{p,q}}$ of the Karcher equation satisfies 
	\[
	\diag(X)  =D:= \diag (A_1^{w_1}\cdots A_m^{w_m}).
	\]
	Furthermore, write
	\[
	D=\diag(d_1,\ldots,d_n),\qquad
	D^{-1}A_i = \mtx{a_{p,q}^{(i)}},
	\qquad i=1,\ldots,m, 
	\]
	then
	\[
	x_{p,p+1} = 
	\frac{d_p \sum_{i=1}^m w_i\, f\!\bigl(a_{p+1,p+1}^{(i)},a_{p,p}^{(i)}\bigr)\, a_{p,p+1}^{(i)}}
	{\sum_{i=1}^m w_i\, f\!\bigl(a_{p+1,p+1}^{(i)},a_{p,p}^{(i)}\bigr)\,  a_{p+1,p+1}^{(i)} },\quad p=1,\ldots,n-1.
	\]
\end{theorem}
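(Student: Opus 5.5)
The plan is to reduce everything to $1\times 1$ and $2\times 2$ principal submatrices. The key observation is that for upper triangular matrices, taking a contiguous principal submatrix $[p:q]$ commutes with products, inverses, $\exp$ and $\log$. The same computation as in the proof of Lemma~\ref{thm: submatrix iteration} (1) then shows that if $X$ solves the Karcher equation $\sum_i w_i\log(X^{-1}A_i)=0$ on $\T_n$, its restriction satisfies
\[
\Bigl(\sum_i w_i\log(X^{-1}A_i)\Bigr)[p:q]=\sum_i w_i\log\bigl(X[p:q]^{-1}A_i[p:q]\bigr)=0 .
\]
So $X[p:q]$ solves the Karcher equation for $A_1[p:q],\ldots,A_m[p:q]$ in $\T_{q-p+1}$. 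To justify the commutation, I would note that $\log$ of a matrix in $\T_n$ can be written as a power series in $B-\lambda I$ after scaling, or as a polynomial via Hermite interpolation on the spectrum. Alternatively, I would use that $Y\mapsto Y[p:q]$ is a block-compression homomorphism on the algebra of upper triangular matrices, so it intertwines the holomorphic functional calculus.

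First take $p=q$. The $(p,p)$ entry gives $\sum_i w_i\log\bigl((A_i)_{pp}/x_{pp}\bigr)=0$, hence $x_{pp}=\prod_i (A_i)_{pp}^{w_i}=d_p$. The diagonal of a product of upper triangular matrices is the product of the diagonals, so $\diag(A_1^{w_1}\cdots A_m^{w_m})=\diag(d_1,\ldots,d_n)$. This gives $\diag(X)=D$.

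Next take $q=p+1$. Then $X[p:p+1]$ solves the $2\times 2$ Karcher equation for $A_i[p:p+1]$. By Theorem~\ref{thm: 2x2 Karcher mean}, and in particular the uniqueness argument in its proof, this $2\times2$ solution is given by \eqref{2x2 Karcher mean 2}. The remaining step is to check that the normalizations match. The $2\times2$ diagonal normalizer is $D[p:p+1]=\diag(d_p,d_{p+1})$, and
\[
(D[p:p+1])^{-1}A_i[p:p+1]=(D^{-1}A_i)[p:p+1],
\]
whose entries are exactly $a^{(i)}_{p,p}$, $a^{(i)}_{p,p+1}$ and $a^{(i)}_{p+1,p+1}$. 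Substituting these into \eqref{2x2 Karcher mean 2} yields the stated formula for $x_{p,p+1}$.

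The only genuine point of care is the compatibility of $\log$ with principal-submatrix restriction. The statement concerns arbitrary solutions rather than iterates, so I would state this compatibility as a small standalone fact rather than quote Lemma~\ref{thm: submatrix iteration}. I would prove it by writing
\[
\log Y=\log(\Lambda)+\log(\Lambda^{-1}Y),
\]
where $\Lambda=\lambda I$ and $\lambda$ is chosen so that $\Lambda^{-1}Y-I$ has spectral radius less than $1$. The series expansion then visibly commutes with the $[p:q]$ compression, since products of upper triangular matrices restrict blockwise to contiguous index sets. Everything else is direct substitution.
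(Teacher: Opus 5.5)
Your proposal is correct and follows essentially the same route as the paper: restrict the Karcher equation to the principal submatrices $[p:p]$ and $[p:p+1]$, read off the diagonal, and invoke the uniqueness in Theorem~\ref{thm: 2x2 Karcher mean} after checking that $(D[p:p+1])^{-1}A_i[p:p+1]=(D^{-1}A_i)[p:p+1]$. Your explicit argument that compression to a contiguous index block is an algebra homomorphism on upper triangular matrices, and hence commutes with inverses and $\log$, justifies a step the paper uses without comment.
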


\begin{proof}
	The matrix $X$ satisfies the Karcher equation
	\[
	\sum_{i=1}^{m} w_i \log \left( X^{-1} A_i  \right) = 0.
	\]
	For $p,q\in\{1,\ldots,n\}$ with $p\le q$, we have
	\[
	\sum_{i=1}^{m} w_i \log \left( X[p:q]^{-1} A_i[p:q]  \right) = 
	\left( \sum_{i=1}^{m} w_i \log \left( X^{-1} A_i  \right)\right) [p:q]=0.
	\]
	Hence 
	\begin{equation}
		X[p:q] \text{ solves the Karcher equation for } A_1[p:q],\ldots,A_m[p:q],\qquad 1\le p\le q\le n.
	\end{equation}
	When $q=p\in\{1,\ldots,n\}$, we get
	\[
	x_{p,p}= (A_1)_{pp}^{w_1}\cdots (A_m)_{pp}^{w_m},
	\] 
	so that $\diag(X)=\diag (A_1^{w_1}\cdots A_m^{w_m})$. When
	$q=p+1\in\{2,\ldots,n\}$,
	Theorem \ref{thm: 2x2 Karcher mean} implies the expression of
	$x_{p,p+1}$. 
\end{proof}

\begin{theorem} Let  $n\ge 2$ and $m\ge 2$. 
	For any probability vector
	$\omega=(w_1,\ldots,w_m)$ with at least two positive weights, 
	there exists no constant \(c>0\) for which the sequence $\{X_k\}$ from the projection mean iteration
	\eqref{projection mean algorithm} converges for every choice of
	\(A_1,\ldots,A_m\in\T_n\).
\end{theorem}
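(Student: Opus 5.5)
We reduce the statement to the $2\times 2$ case through Lemma~\ref{thm: submatrix iteration}(2). Fix $c>0$ and relabel indices so that $w_1,w_2>0$. Theorem~\ref{thm: 2x2 normalized convergence}(2) gives a counterexample in $\T_2$; we embed it into the leading $2\times 2$ block of $\T_n$ and pad the rest with identity.

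\textbf{Step 1: the $2\times 2$ counterexample.} As in the proof of Theorem~\ref{thm: 2x2 normalized convergence}(2), choose $s>0$ and set $r_1=e^{-s}$, $r_2=e^{w_1s/w_2}$, and $r_i=1$ for $i\ge3$. Take
\[
B_1=\mtx{r_1&1\\0&1},\qquad B_i=\mtx{r_i&0\\0&1}\quad(i\ge2).
\]
Then the normalizing diagonal is $I_2$, and $c_0\ge w_1s/(1-e^{-s})$. Choose $s$ large enough that $c_0>2c$. The quantity $b=w_1f(r_1,1)$ is nonzero, so the mean has $(1,2)$-entry $b/c_0\ne 0$. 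Hence $X_0=I_2$ is not the mean, and by Theorem~\ref{thm: 2x2 normalized convergence} the iteration starting at $I_2$ diverges.

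\textbf{Step 2: embedding into $\T_n$.} Define $A_i\in\T_n$ as the block-diagonal matrix $B_i\oplus I_{n-2}$, so that $A_i[1:2]=B_i$. Take $X_0=I_n$, whose block $X_0[1:2]=I_2$. By Lemma~\ref{thm: submatrix iteration}(1), the sequence $\{X_k[1:2]\}$ is exactly the $2\times 2$ iteration from Step 1, which diverges. Lemma~\ref{thm: submatrix iteration}(2) then shows that $\{X_k\}$ diverges. Since $c>0$ was arbitrary, no constant $c$ works for all inputs.

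\textbf{Main obstacle.} Most of the argument is bookkeeping. The one point that needs care is the meaning of "converges for every choice of $A_1,\ldots,A_m$": an initial point must be specified. We handle this by fixing $X_0=I_n$, which equals $D$ here since the full diagonal of $A_1^{w_1}\cdots A_m^{w_m}$ is $I_n$. This matches the setting of Theorem~\ref{thm: 2x2 normalized convergence}(2).

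If instead the intended reading is the natural initialization $X_0=D$, the same example still works, because $D=I_n$. If the reading is "for some initial point", the same example remains valid, since $X_0=D$ is the canonical start. Under a stronger reading requiring convergence for all $X_0$, divergence from one initial point already suffices.
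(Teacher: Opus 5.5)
Your proof is correct and is essentially the paper's own argument: you take the $\T_2$ counterexample from Theorem~\ref{thm: 2x2 normalized convergence}(2), embed it as the upper-left $2\times 2$ block padded with identities (initial point included), and transfer divergence to $\T_n$ via Lemma~\ref{thm: submatrix iteration}. One caveat on your closing aside about the ``for some initial point'' reading: if $X_0$ could be chosen after the data so as to favor convergence, the statement would be false, since for your example $X_0=G(\omega;B_1,\ldots,B_m)\oplus I_{n-2}$ is a stationary point of the iteration; the theorem must therefore be read with $X_0$ fixed by a rule such as $X_0=D$, or chosen together with the data, which is what both you and the paper effectively do.
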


\begin{proof} 
	By Theorem \ref{thm: 2x2 normalized convergence} (2), for any $c>0$ we can choose data in $\T_2$ for which the iteration diverges. Embed these matrices and the initial point as upper-left $2\times2$ blocks in $\T_n$, with identity matrices in the complementary diagonal block. Choose the full initial point so that its upper-left block is the $2\times2$ initial point.
	The projection mean iteration \eqref{projection mean algorithm} on the submatrices
	$$A_1[1:2],\ \ldots,\ A_m[1:2],\ X_0[1:2]\in\T_2$$ produces a divergent
	sequence, namely $\{X_k[1:2]\}$ by Lemma \ref{thm: submatrix iteration} (2). Hence, the sequence $\{X_k\}$  diverges. 
\end{proof}

\begin{theorem}
	Let $A_1,\ldots,A_m,X_0\in\T_n$, and put
	$D=\diag(A_1^{w_1}\cdots A_m^{w_m})$. For each $p=1,\ldots,n-1$, denote by $G_p$ the Karcher mean of $A_1[p:p+1],\ldots,A_m[p:p+1]$ and let $c_p$ be the corresponding constant $c_0$. The iteration \eqref{projection mean algorithm} does not converge if either
	\begin{enumerate}
		\item $0<c\le1/2$ and $\diag(X_0)\ne D$; or
		\item for some $p$, $\diag(X_0[p:p+1])=D[p:p+1]$, $X_0[p:p+1]\ne G_p$, and $0<c\le c_p/2$.
	\end{enumerate}
\end{theorem}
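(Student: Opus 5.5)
The plan is to reduce both cases to the $2\times2$ or $1\times1$ principal submatrices and then apply Lemma~\ref{thm: submatrix iteration}(2). Since every principal block of $\{X_k\}$ is generated by the same iteration applied to the corresponding blocks of the data, it suffices to exhibit one principal submatrix $[p:q]$ along which the induced iteration fails to converge.

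For case (1), I would use the $1\times1$ blocks $[p:p]$. By Lemma~\ref{thm: submatrix iteration}(1), the diagonal entry $(X_k)_{pp}$ obeys the scalar iteration
\[
\log (X_{k+1})_{pp} = \Bigl(1-\tfrac1c\Bigr)\log (X_k)_{pp} + \tfrac1c \log d_p .
\]
Hence $\log\bigl((X_k)_{pp}/d_p\bigr) = r^k \log\bigl((X_0)_{pp}/d_p\bigr)$ with $r=1-1/c$; this is the same computation as \eqref{diag_Xk} in Theorem~\ref{thm: gen conv behavior}. Choose $p$ with $(X_0)_{pp}\ne d_p$. If $c<1/2$, then $|r|>1$, so this quantity is unbounded. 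If $c=1/2$, then $r=-1$, so it alternates between two distinct values. In either case the $(p,p)$-entry does not converge, so $\{X_k\}$ does not converge.

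For case (2), I would fix the given $p$ and look at the $2\times2$ block $[p:p+1]$. By Lemma~\ref{thm: submatrix iteration}(1), $\{X_k[p:p+1]\}$ is exactly the projection mean iteration for $A_1[p:p+1],\ldots,A_m[p:p+1]$ started at $X_0[p:p+1]$. The diagonal of this block Karcher problem is $\diag(A_1[p:p+1]^{w_1}\cdots A_m[p:p+1]^{w_m}) = D[p:p+1]$, because diagonals of products of upper triangular matrices multiply entrywise. So the hypotheses say that the initial block has the correct diagonal and is not the mean $G_p$. Theorem~\ref{thm: 2x2 normalized convergence} then says the block iteration converges if and only if $c>c_p/2$. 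Since $c\le c_p/2$, the block sequence diverges, and Lemma~\ref{thm: submatrix iteration}(2) transfers this to $\{X_k\}$.

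The only point that needs care is the identification in case (2): the normalizing diagonal and the constant used for the block must be exactly $D[p:p+1]$ and $c_p$ as defined in the statement. This is immediate from the entrywise multiplicativity of diagonals, so I expect no real obstacle. The rest is a direct combination of earlier results.
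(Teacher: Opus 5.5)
Your proposal is correct and follows essentially the same route as the paper. Case (1) is the entrywise diagonal recurrence \eqref{diag_Xk}, which you rederive through the $1\times1$ blocks of Lemma~\ref{thm: submatrix iteration}. Case (2) combines Theorem~\ref{thm: 2x2 normalized convergence} on the block $[p:p+1]$ with Lemma~\ref{thm: submatrix iteration}; your check that the block problem's diagonal is exactly $D[p:p+1]$ (contiguous principal blocks are multiplicative on upper triangular matrices) is a detail the paper leaves implicit.
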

\begin{proof}
	The diagonal recurrence \eqref{diag_Xk} applied entrywise proves (1). In (2), Theorem~\ref{thm: 2x2 normalized convergence} shows that the indicated $2\times2$ iteration does not converge. Lemma~\ref{thm: submatrix iteration} then gives the conclusion.
\end{proof}

\bibliographystyle{amsplain}

\bibliography{Karchermean}

\end{document}